\def\makedef#1#2{\expandafter\gdef\csname#1\endcsname{#2}}
\def\makelet#1#2{\expandafter\let\csname#1\expandafter\endcsname\csname#2\endcsname}
\def\mat#1{\ensuremath{#1}\xspace}
\def\defmath#1#2{\makedef{#1}{\mat{#2}}}
\def\redefmath#1{\makelet{temp@#1}{#1}\defmath{#1}{\csname temp@#1\endcsname}}
\def\defbb#1{\defmath{c#1}{\mathbb{#1}}}		% usage \cA
\def\defcal#1{\defmath{l#1}{\mathcal{#1}}}	% usage \lA
\def\deffrak#1{\defmath{g#1}{\mathfrak{#1}}} % usage \ga,\gA
\def\bbcal#1{\defbb{#1}\defcal{#1}\deffrak{#1}}
\def\al{\mat{\alpha}}
\def\be{\mat{\beta}}
\def\Ga{\mat{\Gamma}}
\def\eps{\mat{\varepsilon}}
\def\hi{\mat{\chi}}
\def\la{\mat{\lambda}}
\def\Om{\mat{\Omega}}
\def\om{\mat{\omega}}
\def\vi{\mat{\varphi}}
\def\si{\mat{\sigma}}
\def\te{\mat{\theta}}
\def\mrm@#1{\mat{\mathrm{#1}}}
\def\opr#1{\operatorname{#1}}
\def\DMO#1#2{\defmath{#1}{\opr{#2}}}
\def\oper#1{\defmath{#1}{\opr{#1}}}
\def\norm#1{\mat{\lVert#1\rVert}}
\def\n#1{\mat{\lvert#1\rvert}}
\def\dd{\mat{\partial}}
\def\ts{\otimes}
\def\tl#1{\mat{\tilde{#1}}}
\def\what#1{\mat{\widehat{#1}}}
\def\sb{\subset}
\def\imp{\mat{\Rightarrow}}
\def\xx{\times}
\def\half{\mat{\frac12}}
\def\oh{\half} %one half
\def\inv{^{-1}}
\def\dual{^\vee}
\def\sst{{\rm{sst}}}
\def\ang#1{\mat{\left\langle #1\right\rangle}}
\def\set#1{\mat{\{ #1\}}}
\def\sets#1#2{\mat{\{ #1 \mid #2\}}}
\def\emb{\hookrightarrow}
\def\mto{\mapsto}
\def\arr{\futurelet\test\arrtest}
\def\arrtest{\ifx^\test\let\next\arra\else\let\next\arrb\fi\next}
\def\arra^#1{\xrightarrow{#1}} \def\arrb{\to}
\def\arrowsD{
\def\mto{{\:\vrule height .9ex depth -.2ex width .04em\!\!\!\;\ar}}
\def\ar{{\:\vrule depth -.52ex height .60ex width 0.85em\;\!\!\rhla\,}}
\def\arr{\futurelet\test\arrtest}
\def\arrtest{\ifx^\test\let\next\arra\else\let\next\arrb\fi\next}
\def\arra^##1{\rTo^{##1}} \def\arrb{\ar}
\def\emb{\futurelet\test\embtest}
\def\embtest{\ifx^\test\let\next\emba\else\let\next\embb\fi\next}
\def\emba^##1{\rInto^{##1}} \def\embb{{\:\rthooka\!\!\!\ar}}
\newarrow{Eq}=====
\def\rrarr{\pile{\rTo\\ \rTo}}
\def\lrarr{\pile{\rTo\\ \lTo}}   
\newarrow{ShortTo}{}{}-->
}
\def\arrowsDStandard{
\newarrow{TeXto}----{->}
\newarrow{TeXinto}C---{->}
\newarrow{TeXonto}----{->>}
\newarrow{TeXdashto}{}{dash}{}{dash}{->}
\newarrow{Eq}=====
\def\ar{\rightarrow}
\def\emb{\futurelet\test\embtest}
\def\embtest{\ifx^\test\let\next\emba\else\let\next\embb\fi\next}
\def\emba^##1{\rTeXinto^{##1}} \def\embb{\hookrightarrow}
\def\rrarr{\pile{\rTo\\ \rTo}}
\def\lrarr{\pile{\rTo\\ \lTo}}   
}
\def\theorems{
\newcounter{nthr} %auxiliary counter
\numberwithin{nthr}{section}
\let\theHnthr\thenthr
%solves a problem with hyperref, Heiko <oberdiek@ruf.uni-freiburg.de>
%a different solution: option hypertexnames=false in hyperref
\newtheorem{thr}[nthr]{Theorem}
\newtheorem{prp}[nthr]{Proposition}
\newtheorem{lmm}[nthr]{Lemma}
\newtheorem{crl}[nthr]{Corollary}

\newtheorem{conj}{Conjecture}
\newtheorem{rmr}[nthr]{Remark}
\theoremstyle{definition}
\newtheorem{dfn}[nthr]{Definition}
\newtheorem{exm}[nthr]{Example}

\theoremstyle{remark}
}
\newif\ifrem\remtrue
\def\rem#1{\ifrem\marginpar{\raggedright\footnotesize #1}\fi}
\def\lb#1{\mat{\underline{#1}}} %lower bar
\def\ub#1{\mat{\overline{#1}}}  %upper bar
\def\over#1#2{\mat{\substack{#1\\#2}}} % #2 over #1
\def\ie{i.e.\ }
\def\eg{e.g.\ }
\def\cf{cf.\ }
\def\eprint#1#2{%
%called from a bbl file, #1 - archive name, #2 - preprint id
\expandafter\ifx\csname eprint@#1\endcsname\relax#1:#2%
\else\def\itemID{#2}\csname eprint@#1\endcsname\fi}
\def\defArchive#1#2{%
%#1 - archive name, #2 - code using \itemID for the preprint id
% archive name should be in lower case
\makedef{eprint@#1}{#2}}
\begin{document}
\remfalse
\title[Wall-crossing formulas]{Wall-crossing formulas for framed objects}
\author{Sergey Mozgovoy}%
\email{mozgovoy@maths.ox.ac.uk}%
\begin{abstract}
We prove wall-crossing formulas for the motivic invariants of the moduli spaces of framed objects in the ind-constructible abelian categories. Developed techniques are applied in the case of the motivic Donaldson-Thomas invariants of quivers with potentials. Another application is a new proof of the formula for the motivic invariants of smooth models of quiver moduli spaces.
\end{abstract}
\maketitle
\tableofcontents

\section{Introduction}
The goal of this paper is to prove general wall-crossing formulas for the motivic invariants of the moduli spaces of framed objects in the ind-constructible abelian categories.
Some of the formulas of this type were previously discovered by various authors in various contexts \cite{chuang_chamber,engel_smooth,nagao_counting,toda_curve}. Every time the tools to prove these results were developed from scratch.
In this paper we will develop a general framework for the wall-crossing formulas for framed objects in the abelian categories. Instead of giving the definition of an abelian category with framing right now, let me give a couple of examples. 

Let $X$ be an algebraic variety and let \lA be an abelian category consisting of morphisms $f:V\ts \lO_X\to F$, where $V$ is a vector space and $F$ is a coherent sheaf over $X$ (sometimes one considers a fixed coherent sheaf instead of $\lO_X$ \cite{huybrechts_framed}). Morphisms between the objects $f:V\ts \lO_X\to F$ and $g:W\ts \lO_X\to G$ in \lA are pairs of morphisms $(V\to W,F\to G)$ such that the corresponding square diagram commutes. We define the framing rank $v:K_0(\lA)\to\cZ$ by $v(f)=\dim V$. Note that the category $\lA^0\sb \lA$ of unframed objects, consisting of objects $f\in\lA$ such that $v(f)=0$, is just the category of coherent sheaves over $X$. Framed sheaves were studied for example in \cite{huybrechts_framed,pandharipande_curve,thaddeus_stable}. If $X$ is a smooth $3$-Calabi-Yau manifold with $H^1(X,\lO_X)=0$, let \lA be the category consisting of morphisms $V\ts \lO_X\to F$ as above, with $F\in\Coh_{\le1}(X)$ (\ie having support of dimension $\le1$). Category \lA is a full abelian subcategory of an abelian subcategory
$$\lA_X=\ang{\lO_X,\Coh_{\le1}(X)[-1]}_{\rm ex}$$
of $D^b(\Coh (X))$ studied by Toda \cite{toda_curve}. We can define the framing rank also on $K_0(\lA_X)$ by $F\mto\rk F$. The subcategories of \lA and $\lA_X$ consisting of objects having framing rank $0$ are both equivalent to $\Coh_{\le1}(X)$. For more details see Example \ref{exm:toda}.
\rem{if $X$ is simply connected then $H^1(X,\lO_X)=0$. we need this}

Let $(Q,W)$ be a quiver with potential and let $J_{Q,W}=\cC Q/(\dd W)$ be the corresponding Jacobian algebra. Given $w\in\cN^{Q_0}$, we construct a new quiver $Q'$ by adding a new vertex $*$ to $Q_0$ and adding $w_i$ arrows from $*$ to $i$ for every $i\in Q_0$. Let $J_{Q',W}=\cC Q'/(\dd W)$ be the Jacobian algebra of $Q'$ with respect to the potential $W$. Let $\lA=\moda J_{Q',W}$ be the category of finite-dimensional representations of $J_{Q',W'}$. We define the framing rank $v:K_0(\lA)\to\cZ$ by $v(M)=\dim M_*$, where $M_*$ is a vector space concentrated at vertex $*\in Q'_0$. Note that the category $\lA^0\sb \lA$ consisting of objects $M$ with $v(M)=0$ is just the category of representations of $J_{Q,W}$. Framed representations for quivers with trivial potentials were studied in \cite{engel_smooth,reineke_cohomology}. Framed representations for quivers with potentials arising from brane tilings were studied in \cite{behrend_motivic,mozgovoy_noncommutative,nagao_counting,szendroi_non-commutative} and many other works.

Assume now that we have an abelian category \lA with a framing rank $v:K_0(\lA)\to\cZ$ and a stability function $Z:K_0(\lA)\to\cC$. Under some conditions the functions $Z_c=Z-cv:K_0(\lA)\to\cC$, $c\in\cR$, are again stability functions. We will study how the property of $Z_c$-stability for objects $E\in\lA$ with framing rank $v(E)=1$ changes when we slightly shift $c$. The most strong results in this direction were proved by Diaconescu \cite{diaconescu_chamber} in the case of ADHM sheaves (they form a framed abelian category). Related results were proved in \cite{nagao_counting} in the case of the quiver with potential arising from the conifold. Our results relate $Z_c$-semistable objects (having framing rank one) with $Z_{c\pm\eps}$-stable objects (also having framing rank one) for $0<\eps\ll1$. This relation can be translated into the wall-crossing formula (see Theorem \ref{hall wall cross}) in the Hall algebra of \lA. It envolves the generating functions for $Z_c$- and $Z_{c\pm\eps}$-semistable objects with framing rank one and the generating function for $Z$-semistable unframed objects.

The wall-crossing formula in the Hall algebra can be translated into the wall-crossing formula in the quantum torus (determined by a skew-symmetric form on $K_0(\lA)$) after applying the integration map. There is a very powerful construction of the integration map due to Kontsevich and Soibelman \cite{kontsevich_stability} for an arbitrary ind-constructible $3$-Calabi-Yau category with some additional data. In this paper, however, we will consider two simpler constructions of the integration maps. The first one is the integration map of Reineke \cite{reineke_harder-narasimhan,reineke_counting} for quivers with trivial potentials (its motivic version was constructed by Joyce \cite{joyce_configurationsa}). The second one is the integration map for quivers with potentials constructed in \cite{mozgovoy_motivica}. Accordingly, we will give two applications of the developed techniques. Using the first integration map we will give a new proof of the formula for the invariants of smooth models of quiver moduli spaces \cite{engel_smooth}. Using the second integration map we will derive some interesting information on the non-commutative Donaldson-Thomas invariants and some relations between the motivic DT invariants for framed representations and
the motivic DT invariants for unframed representations. We will especially emphasize the role of universal DT invariants for the computation of all other DT invariants (framed and unframed) for arbitrary stability parameters.

Especially simple wall-crossing formulas are obtained in the symmetric case (this means that the restriction of the skew-symmetric form to the unframed part is zero). In the case of a quiver with potential $(Q,W)$ this is equivalent to the condition that the quiver $Q$ is symmetric (\ie the number of arrows from $i$ to $j$ equals the number of arrows from $j$ to $i$ for any $i,j\in Q_0$). Note that we do not require the extended quiver $Q'$ to be symmetric. The other important symmetric case is provided by the category $\lA_X$ for a $3$-Calabi-Yau manifold $X$ described earlier. The reason is that the Euler characteristic $\hi(F,G)=0$ for any $F,G\in\Coh_{\le1}(X)$.
\rem{ (note that $\ch_0F=\ch_1F=0$ for $F\in\Coh_{\le1}(X)$ and apply the Hirzebruch-Riemann-Roch theorem). }

If a quiver with potential $(Q,W)$ arises from a consistent brane tiling then $Q$ is rarely symmetric (we say then that the brane tiling is symmetric). The only known examples arise from those brane tilings that have the toric diagram without interior lattice points (by \cite{aganagic_wall} these toric diagrams are trapezoids with height one and the toric diagram corresponding to $\cC^3/(\cZ_2\xx\cZ_2)$). It is natural to conjecture that these are the only symmetric brane tilings. The author was informed by Alastair King that a more strong assertion is true. Namely, for any consistent brane tiling the rank of the skew-symmetric form of the corresponding quiver equals to the number of interior lattice points of the corresponding toric diagram. Interestingly enough, only for symmetric brane tilings there are known explicit formulas for the classical Donaldson-Thomas invariants (also called generalized Donaldson-Thomas invariants \cite{joyce_theory}), see \eg \cite{gholampour_counting,nagao_deriveda,nagao_counting,szendroi_non-commutative,young_computing,young_generating}. The author expects that the motivic Donaldson-Thomas invariants can also be computed in all these cases.  

I would like to thank Tamas Hausel, Kentaro Nagao, and Bal\'azs Szendr\H oi for many useful discussions. 
The author's research was supported by the EPSRC grant EP/G027110/1.

\def\qt{\mat{\mathbb T}}
\def\cqt{\mat{\what{\mathbb T}}}
\def\Ieq{I_{\rm eq}}
\def\hall{\what H}
\def\eq{H_{\rm eq}}
\def\heq{\what H_{\rm eq}}
\def\mot{\opr{Mot}_{\rm st}}
\def\bb{B}
\oper{CM}

\section{Preliminaries}
%\subsection{\la-Rings}

\subsection{Ring of motives}
Let $\lM=K_0(\CM_\cC)$ be the Grothendieck ring of the category of effective Chow motives over \cC with rational coefficients. It is known that $\lM$ is a (special) \la-ring \cite{getzler_mixed,heinloth_note}. 

\begin{rmr}
The reason why we work with $K_0(\CM_\cC)$ instead of a more usual ring $K_0(\Var_\cC)$ of algebraic varieties is that $K_0(\CM_\cC)$ is a (special) \la-ring, while $K_0(\Var_\cC)$ is, as far as the author knows, only a pre-\la-ring. There is a natural ring homomorphism $h:K_0(\Var_\cC)\to K_0(\CM_\cC)$ that respects the pre-\la-ring structures. In particular $h([S^nX])=\si_n h([X])$ for any smooth projective variety $X$, where $\si_n$ are the \si-operations on the \la-ring $K_0(\CM_\cC)$, see \cite{banorollin_motive,gillet_descent,guill'en_un,heinloth_note}.
\end{rmr}

Let $\cL=[\cA^1]\in\lM$ be the Lefschetz motive. Let $\what\lM$ be the dimensional completion of $\lM_\cL=\lM[\cL\inv]$ defined as follows (\cf \cite{behrend_motivica}). Consider a decreasing group filtration $F^n\lM_\cL\sb\lM_\cL$, $n\ge0$, where $F^n\lM_\cL$ is generated by the elements $$[X]\cL^{-k},\qquad\dim X-k\le -n.$$
Every $F^n\lM_{\cL}$ is an ideal in the ring $F^0\lM_\cL$ and we can construct the corresponding ring completion $\what{F^0\lM_\cL}$. We define then
$$\what\lM=\what{F^0\lM_\cL}\ts_{F^0\lM_\cL}\lM_\cL.$$
Finally, we define $\lV=\what\lM[\cL^\oh]$. This ring still has a structure of a \la-ring, where we extend the Adams operations by
$$\psi_n(\cL^\oh)=\cL^{\frac n2}.$$
Note that the elements $1-\cL^n$ as well as $[\GL_n]$ are invertible in~\lV. 
%In particular, the motive of $\GL_n(\cC)$
%$$[\GL_n(\cC)]=\prod_{i=0}^{n-1}(\cL^n-\cL^{i})=\cL^{n^2}(\cL\inv)_n$$
%is invertible in \lV, where $(q)_n=\prod_{i=1}^n(1-q^i)$.

\subsection{Quantum torus}
\label{qt}
Let $\Ga$ be a free finitely generated abelian group endowed with a skew symmetric bilinear form $\ang{-,-}$. Let $R$ be a commutative ring and let $q^\oh\in R$ be some invertible element. 

\begin{dfn}
Define the quantum torus $\qt_\Ga=\qt_{\Ga,R}$ to be the algebra isomorphic as a vector space to the group algebra $R[\Ga]$ (we denote its basis elements by $x^\al,\ \al\in\Ga$) and endowed with multiplication
$$x^\al\circ x^\be=(-q^\oh)^{\ang{\al,\be}}x^{\al+\be}\qquad \al,\be\in\Ga.$$ 
\end{dfn}

\begin{rmr}
If $R=\lV$ is the ring of motives, then we choose $q^\oh=\cL^\oh$. The corresponding quantum torus is called the motivic quantum torus.
\end{rmr}

%\subsection{Quantum torus completion}
%\label{qt completion}
Let $\Ga_+\sb\Ga$ be a pointed semigroup, \ie
$$\Ga_+\cap(-\Ga_+)=\set0.$$
We define a partial order on \Ga by declaring $\al\le\be$ if $\be-\al\in\Ga_+$. We will always assume that for any $\al\in\Ga_+$ the number of elements $0<\be<\al$ is finite. If $\Ga_+$ is finitely generated then our assumption is satisfied and this will be the case in all our examples. 

\begin{rmr}
\label{contra}
The semigroup $\Ga_+\sb\cZ^2$ given by
$$\Ga_+=(\cZ_{>0}\xx\cZ)\cup(\set0\xx\cZ_{\ge0})$$
is pointed but it does not satisfy our last assumption. Indeed, we have $0<(1,n)<(2,0)$ for any $n\in\cZ$. This semigroup is not finitely generated.
\end{rmr}

\begin{rmr}
Another family of semigroups satisfying our assumption comes from \cite[Section 2.2]{kontsevich_stability}. Let $Q$ be a quadratic form on \Ga and let $Z:\Ga\to\cC$ be a group homomorphism such that $Q|_{\ker Z}<0$. Let $V\sb\cC$ be a strict sector (that is a convex cone such that its closure is pointed, that is $\ub V\cap(-\ub V)=\set0$). Define the semigroup $\Ga_+\sb\Ga$ to be generated by the elements
\begin{equation}
\sets{\al\in\Ga}{Z(\al)\in V,Q(\al)\ge0}.
\label{generating}
\end{equation}
Then for any $\al\in\Ga_+$ the number of elements $0<\be<\al$ is finite. For simplicity, let us assume that $Q$ has signature $(2,n-2)$. Then we can write $Q$ in the form $Q(\al)=-\norm\al+C\n{Z(\al)}$, for some norm $\norm-$ on \Ga and some $C>0$. Assume that there exists $\al\in\Ga_+$ with an infinite number of elements $0<\be<\al$. The semigroup $S=Z(\Ga)\cap\ub V$ is finitely generated, therefore there exists a finite number of element $z\in S$ such that $Z(\al)-z\in S$. This implies that there exists an infinite number of elements $0<\be<\al$ with the same value $Z(\be)=z$. We can assume that these $\be$ are elements of the generating set \eqref{generating}. Therefore they satisfy $\norm\be\le C\n{Z(\be)}=C\n z$. But there exists only a finite number of element in \Ga inside a fixed sphere.

It follows from the example in Remark \ref{contra} that the sector
$$V=(\cR_{>0}\xx\cR)\cup(\set0\xx\cR_{\ge0})$$
will not work also it does not contain a straight line (but its closure does).
\end{rmr}

\begin{dfn}
\label{completion}
Consider the subalgebra $\qt_{\Ga_+}:=\oplus_{\al\in\Ga_+}Rx^\al\sb\qt_{\Ga}$.
We define its completion to be the algebra $\cqt_{\Ga_+}:=\prod_{\al\in\Ga_+}Rx^\al$
with the same product rule as above. It follows from our assumption that multiplication is well defined. Define the complete quantum torus to be the algebra
$$\cqt_{\Ga_+,\Ga}:=\cqt_{\Ga_+}\ts_{\qt_{\Ga_+}}\qt_{\Ga}.$$
We call it the completion of $\qt_\Ga$ with respect to $\Ga_+$.
\end{dfn}

If $R$ is a \la-ring, then we endow also $\qt_\Ga$ (and $\cqt_{\Ga_+},\cqt_{\Ga_+,\Ga}$) with a structure of a \la-ring by the rule
$$\psi_n(ax^\al)=\psi_n(a)x^{n\al},\qquad a\in R,\al\in\Ga.$$
Using this \la-ring structure we can endow the complete algebra $\cqt_{\Ga_+}$ with a plethystic exponential \cite{getzler_mixed,mozgovoy_computational}
$$\Exp(f)=\sum_{n\ge0}\si_n(f)=\exp\left(\sum_{n\ge1}\frac1n\psi_n(f)\right).$$
 
%\begin{rmr}
%Given an algebra $A$, let $A[\Ga]$ be the group algebra of \Ga over $A$ (we denote an element in $A[\Ga]$ corresponding to $\al\in\Ga$ by $x^\al$) and let $A\pser \Ga$ be its completion with respect to $\Ga_+$. The last algebra is defined as follows.
%For any $\al\in\Ga$, let $I_\al=\oplus_{\be\ge\al}Ax^\al$ be an ideal in $A[\Ga]$. Then, for $\al\ge\be$, $I_\al\sb I_\be$ and there is a surjection
%$$A[\Ga]/I_\al\to A[\Ga]/I_\be.$$
%We define $A\pser\Ga$ as an inverse limit $\lim_\Ga A[\Ga]/I_\al$
%over the poset $\Ga$.
%\end{rmr}

\subsection{Stability functions}
For a comprehensive introduction to stability functions see \cite{bridgeland_stability}.
Let $\lA$ be an abelian category and let $K_0(\lA)$ be its Grothendieck group. We fix once and for all a group homomorphism
$$\cl:K_0(\lA)\to\Ga,$$
where $\Ga$ is a free finitely generated abelian group, endowed with a skew-symmetric bilinear form $\ang{-,-}$.

\begin{rmr}
Usually one requires that for any $E,F\in\lA$ the number $\ang{\cl E,\cl F}$ is somehow related to the dimensions of the $\Ext$-groups $\Ext^i(E,F)$, $\Ext^i(F,E)$ for $i\ge0$. But we will not require this condition.
\end{rmr}

\begin{dfn}
A stability function (or a central charge) on a category \lA is a group homomorphism
$Z:\Ga\to\cC$ such that, for every nonzero $E\in\lA$ (we write $Z(E)$ for $Z(\cl E)$),
$$Z(E)\in \cH=\sets{re^{i\pi\vi}}{r>0,\vi\in(0,1]}\sb\cC.$$
For any nonzero $E\in\lA$ there exist uniquely determined real numbers $m(E)>0$, $\vi(E)\in(0,1]$ ($\vi(E)$ is called the phase of $E$) such that
$$Z(E)=m(E)e^{i\pi\vi(E)}.$$
\end{dfn}

\begin{dfn}
An object $E\sb\lA$ is called $Z$-semistable (resp.\ $Z$-stable) if for any $0\ne F\subsetneq E$ we have $\vi(F)\le\vi(E)$ (resp.\ $\vi(F)<\vi(E)$).
\end{dfn}

\begin{dfn}
A stability function $Z$ is said to have the Harder-Narasimhan property if for any object $E$ there is a finite chain of subobjects, called a HN filtration
$$0=E_0\sb\dots\sb E_n=E$$
such that $E_i/E_{i-1}$, $i=1\dots n$, are semistable and
$$\vi(E_1/E_0)>\dots>\vi(E_n/E_{n-1}).$$
\end{dfn}
One can show that HN filtrations are unique.

\begin{rmr}
There exist group homomorphisms $r,d:\Ga\to\cR$ such that
$$Z=-d+ir.$$
One can show that for any object $0\ne E\in\lA$, $r(E)\ge0$ and if $r(E)=0$ then $d(E)>0$. Conversely, given such group homomorphisms $r,d:\Ga\to\cR$, the function $Z:\Ga\to\cC$ defined by $Z=-d+ir$ is a stability function. We define the slope function
$$\mu(E)=\mu_Z(E)=\frac{d(E)}{r(E)}=-\cot(\pi\vi(E))\in(-\infty,\infty].$$
Then one can easily see that $\mu(E)\le\mu(F)$ if and only if $\vi(E)\le\vi(F)$ for $E,F\in\lA$. Therefore we can use the slope function and the phase function in the definition of stable and semistable objects interchangeably.
\end{rmr}

Let $\Ga_+\sb\Ga$ be a semigroup generated by the elements $\cl E$, where $E\in\lA$. It follows from the existence of a stability function on \Ga that $\Ga_+$ is pointed \ie
$$\Ga_+\cap(-\Ga_+)=\set0.$$
Therefore we can define a partial order on $\Ga$ as in Section \ref{qt}.
If otherwise not stated, we will assume that for any $\al\in\Ga_+$ the number of elements $0<\be<\al$ is finite.

\begin{dfn}
\label{generic}
Given $\al\in\Ga_+$, we say that the stability function $Z$ is \al-generic if for any $0<\be<\al$ the rays $\cR_{>0} Z(\be)$ and $\cR_{>0} Z(\al)$ are not equal (equivalently, $\vi(\be)\ne\vi(\al)$).
\end{dfn}

If $E\in\lA$ is $Z$-semistable and $Z$ is $\cl E$-generic then $E$ is automatically stable.

\subsection{Motivic Hall algebra}
Let \lA be an abelian ind-constructible category over \cC \cite[Definition 8]{kontsevich_stability}. In particular, the set of objects $\Ob\lA$ is an ind-constructible set over \cC (we can always assume that it is a countable disjoint union $\sqcup_{i\in I} Y_i$ of varieties over \cC, in particular it is a scheme locally of finite type over \cC) and the groupoid of isomorphisms $\lM=\Iso\lA$ is an ind-constructible stack with affine stabilizers (we can always assume that it is a countable disjoint union of global quotients $\sqcup_{i\in I} [Y_i/\GL_{n_i}]$, in particular it is an algebraic stack locally of finite type over \cC with affine stabilizers). 

The importance of stacks of finite type over \cC with affine stabilizers is justified by the fact that one can define their motives \cite{behrend_motivica,bridgeland_introduction}.
For an algebraic stack $S$ locally of finite type over \cC with affine stabilizers one defines the group $\mot(S)$ (see \cite[Section 4.2]{kontsevich_stability} or \cite[Section 3.4]{bridgeland_introduction}, where it is denoted by $K(\St/S)$ or \cite{joyce_configurationsa}, where it is denoted by $\lb {\opr{SF}}(S)$) to be the group generated by the isomorphism classes of $1$-morphisms of stacks $X\to S$, where $X$ is an algebraic stack of finite type type over \cC (not just locally of finite type) with affine stabilizers, modulo standard relations, loc.\ cit. In the same way one can define the group $\mot(S)$ if $S$ is an ind-constructible stack with affine stabilizers.
One defines the Hall algebra $H(\lA)$ to be $\mot(\lM)$ endowed with multiplication that mimics the usual Ringel multiplication, loc.\ cit. 

Under the assumptions of the previous section we can write $\lM=\sqcup_{\al\in\Ga_+}\lM_\al$, where $\lM_\al$ consists of objects having class \al. The algebra $H(\lA)$ is $\Ga_+$-graded. We define the complete Hall algebra $\hall=\hall(\lA)$ to be
$$\hall(\lA)=\prod_{\al\in\Ga_+}H(\lA)_\al.$$

The following result is a version of \cite[Lemma 3.3]{reineke_counting} (see also \cite[Theorem 6.4]{joyce_configurationsa}).

\begin{prp}
\label{reineke map}
Let \lA be an ind-constructible abelian category of homological dimension $1$. Assume that there exists a bilinear form $\hi$ on \Ga such that for any $M,N\in\Ob\lA$ we have
$$\hi(\cl M,\cl N)=\dim \Hom(M,N)-\dim\Ext^1(M,N)$$
and $\ang{\al,\be}=\hi(\al,\be)-\hi(\be,\al)$ for $\al,\be\in\Ga$.
Then the map
$$I:\hall\to\cqt_{\Ga_+},\qquad [X\to\lM_\al]\mto (-\cL^\oh)^{\hi(\al,\al)}[X]x^\al$$
is an algebra homomorphism, where $[X\to\lM_\al]$ is an element in $\mot(\lM_\al)$ and $[X]\in\lV$ is the motive of the stack $X$ having affine stabilizers \cite{behrend_motivica,bridgeland_introduction}. We call the map $I$ the integration map.
\end{prp}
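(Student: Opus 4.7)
The plan is to reduce to generators, unpack the Hall product via the stack of short exact sequences, and compare exponents. By $R$-linearity of both the Hall product and $I$, together with the grading $\hall=\prod_{\al\in\Ga_+}H(\lA)_\al$, it suffices to verify the identity $I([f_X]\cdot[f_Y])=I([f_X])\circ I([f_Y])$ for generators $f_X\colon X\to\lM_\al$ and $f_Y\colon Y\to\lM_\be$ with $X,Y$ of finite type. Recall that $[f_X]\cdot[f_Y]=[p_2\circ q]$, where $\lM^{(2)}_{\al,\be}$ is the stack of short exact sequences $0\to A\to B\to C\to 0$ with $\cl A=\al,\ \cl C=\be$, the $p_i$ ($i=1,2,3$) are the three projections to $\lM$, and $q\colon Z\to\lM^{(2)}_{\al,\be}$ is the base change of $f_X\xx f_Y$ along $(p_1,p_3)\colon \lM^{(2)}_{\al,\be}\to\lM_\al\xx\lM_\be$.

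The geometric heart of the argument, and the main obstacle, is the motive of $Z$. Over a geometric point $(A,C)\in\lM_\al\xx\lM_\be$ the fibre of $(p_1,p_3)$ is the groupoid of extensions of $C$ by $A$: isomorphism classes are parametrised by $\Ext^1(C,A)$, and the automorphism group of any given extension equals $\Hom(C,A)$, realised by $\vi\mto \id_B + i\circ\vi\circ p$ (here homological dimension $1$ ensures no higher $\Ext$ contributes). Hence this fibre is the trivial-action quotient stack $[\Ext^1(C,A)/\Hom(C,A)]$, whose motive is
\[
\cL^{\dim\Ext^1(C,A)-\dim\Hom(C,A)}=\cL^{-\hi(\be,\al)}
\]
by the standing hypothesis on $\hi$. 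Stratifying $X\xx Y$ into constructible locally closed pieces on which the pair $(\dim\Hom,\dim\Ext^1)$ is constant, the map $q$ becomes, on each stratum, a trivial bundle of quotients $[\cA^{d_1}/\cA^{d_2}]$; the crucial point is that the exponent $d_1-d_2=-\hi(\be,\al)$ is the \emph{same} constant on every stratum, being determined by $\al,\be$ alone. Summing motives over strata yields
\[
[Z]=[X][Y]\,\cL^{-\hi(\be,\al)}\in\lV.
\]
Making this rigorous in the ind-constructible-stack setting is the technical core: one must check that $(p_1,p_3)$ is a representable morphism of ind-constructible stacks with affine stabilisers whose pullbacks to the strata really are trivial $[\cA^{d_1}/\cA^{d_2}]$-fibrations, so that the motive identity holds in $\lV$.

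The remaining exponent check is bookkeeping. Expanding $\hi(\al+\be,\al+\be)=\hi(\al,\al)+\hi(\be,\be)+\hi(\al,\be)+\hi(\be,\al)$, writing $\cL^{-\hi(\be,\al)}=(-\cL^\oh)^{-2\hi(\be,\al)}$, and using $\ang{\al,\be}=\hi(\al,\be)-\hi(\be,\al)$, one computes
\begin{align*}
I([f_X]\cdot[f_Y])
&=(-\cL^\oh)^{\hi(\al+\be,\al+\be)}[X][Y]\,\cL^{-\hi(\be,\al)}x^{\al+\be}\\
&=(-\cL^\oh)^{\hi(\al,\al)+\hi(\be,\be)+\ang{\al,\be}}[X][Y]\,x^{\al+\be}\\
&=I([f_X])\circ I([f_Y]),
\end{align*}
where the last equality uses $x^\al\circ x^\be=(-\cL^\oh)^{\ang{\al,\be}}x^{\al+\be}$. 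This completes the comparison, contingent on the geometric step above.
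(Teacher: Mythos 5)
The paper itself gives no proof here: the proposition is stated as a version of \cite[Lemma~3.3]{reineke_counting} and \cite[Theorem~6.4]{joyce_configurationsa}, and the remark that follows only translates conventions (opposite Hall algebra, stacky vs.\ finite-field, normalization of the quantum torus). Your argument reconstructs the standard proof underlying those references, and it is correct: you identify the fibre of $(p_1,p_3)$ over a geometric point $(A,C)$ with the quotient stack $[\Ext^1(C,A)/\Hom(C,A)]$ (homological dimension $1$ is used exactly here, to kill higher $\Ext$), observe that its motive $\cL^{\dim\Ext^1(C,A)-\dim\Hom(C,A)}=\cL^{-\hi(\be,\al)}$ is constant in $(A,C)$ even though the two dimensions individually jump, and the exponent bookkeeping $\hi(\al+\be,\al+\be)-2\hi(\be,\al)=\hi(\al,\al)+\hi(\be,\be)+\ang{\al,\be}$ precisely reproduces the twist $x^\al\circ x^\be=(-\cL^\oh)^{\ang{\al,\be}}x^{\al+\be}$. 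You also have the Hall-product convention right (first factor the subobject), which is the one the paper inherits from the cited sources and is opposite to Ringel's.

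One small terminological slip: $(p_1,p_3)$ is \emph{not} a representable morphism, since its fibres $[\cA^{d_1}/\cA^{d_2}]$ carry a trivial $\cA^{d_2}$-action and hence are Artin stacks with positive-dimensional affine stabilizers, not algebraic spaces. What the stratification argument actually requires, and what holds here, is only that $(p_1,p_3)$ is a finite-type morphism of algebraic stacks with affine stabilizers which is, Zariski-locally over each stratum of $X\times Y$, a product with $[\cA^{d_1}/\cA^{d_2}]$; this is enough for the relation $[Z]=[X][Y]\,\cL^{d_1-d_2}$ in $\lV$.
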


\begin{rmr}
There are some differences of the stated result from the result proved in \cite{reineke_counting}.
First of all the Hall algebra used here (after \cite{bridgeland_introduction,joyce_configurationsa,kontsevich_stability}) is opposite to the usual Ringel-Hall algebra used in \cite{reineke_counting}. Next, our multiplication in the quantum torus is slightly different from the multiplication in the quantum torus used in \cite{reineke_counting}, but these two algebras are canonically isomorphic and our integration map incorporates the isomorphism. And finally, \cite[Lemma 3.3]{reineke_counting} is proved for the Hall algebra of the category of quiver representations over a finite field $\cF_q$, with an integration map given by (if we use our conventions on the multiplication in the Hall algebra and the quantum torus)
$$[M]\mto\frac{(-q^\oh)^{\hi(\al,\al)}}{\#\Aut M}x^{\al},\qquad \al=\lb\dim M$$
for any quiver representation $M$.
\end{rmr}

\section{Stability for framed objects}
\begin{dfn}
A group homomorphism $v:\Ga\to\cZ$ is called a framing rank if for any $E\in\lA$ we have $v(E)\ge0$. Stability $Z=-d+ir$ is called $v$-compatible if whenever $r(E)=0$, we have also $v(E)=0$. 
\end{dfn}

Let $v$ be a framing rank and let $Z=-d+ir$ be a $v$-compatible stability function.
For any $c\in\cR$, we define a new stability function $Z_c=Z-cv$. The corresponding slope function is given by
$$\mu_c(E)=\mu_{Z_c}(E)=\frac{d(E)+cv(E)}{r(E)}.$$
We assume that for any $c\in\cR$ the stability function $Z_c$ has the HN property. 

\begin{dfn}
We say that an object is $c$-semistable (resp.\ $c$-stable) if it is semistable (resp.\ stable) with respect to $Z_c$. We say that $c\in\cR$ is \al-generic if $Z_c$ is \al-generic (see Definition~\ref{generic}).
\end{dfn}

Our goal is to study $c$-semistable objects with $v=1$
and to study their behavior when we slightly shift $c$. Many results of this section can be found in \cite{diaconescu_chamber} in a slightly different form.

\begin{rmr}
Our condition on $Z$-compatibility of the framing rank $v$ means that $r(E)$ incorporates both the rank of the unframed object and the framing rank. This is slightly different from the approach of \cite{diaconescu_chamber}, where the deformed slope function did not actually came from the stability function and one had to consider the objects with $Z(E)=0$ and $v(E)>0$ separately from the other cases.
\end{rmr}

\begin{lmm}
\label{unique}
Let $0<\be<\al$ and $v(\al)=1$. Then there exists at most one $c\in\cR$ such that $\mu_c(\be)=\mu_c(\al)$.
\end{lmm}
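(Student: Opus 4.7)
The plan is to rewrite $\mu_c(\be)=\mu_c(\al)$ explicitly. Using the formula $\mu_c(E)=(d(E)+c\,v(E))/r(E)$ together with the hypothesis $v(\al)=1$, clearing denominators turns it into the linear equation
\[
c\,[\,r(\al)v(\be)-r(\be)\,]=r(\be)d(\al)-r(\al)d(\be)
\]
in the single unknown $c\in\cR$. Such a linear equation has at most one solution unless both its coefficient and its right-hand side vanish simultaneously, so the whole lemma reduces to ruling out that degenerate case. Two preliminary observations will be used throughout: $v(\al)=1>0$ combined with the $v$-compatibility of $Z$ forces $r(\al)>0$ (otherwise $v(\al)$ would be zero), and since $0<\be<\al$ in $\Ga_+$ and $v\ge0$ on effective classes, both $v(\be)$ and $v(\al-\be)=1-v(\be)$ are nonnegative, so $v(\be)\in\{0,1\}$.

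Assume for contradiction that both sides of the displayed equation vanish. Then $r(\be)=v(\be)\,r(\al)$ and $r(\al)d(\be)=v(\be)\,r(\al)d(\al)$, which, after dividing by $r(\al)>0$, yields $d(\be)=v(\be)d(\al)$. The two identities combine into the single equality $Z(\be)=v(\be)Z(\al)$, so $Z(\ga)=0$ for $\ga=\be$ when $v(\be)=0$ and for $\ga=\al-\be$ when $v(\be)=1$. In either case $\ga$ is a nonzero element of $\Ga_+$, and it remains only to show that $Z$ cannot vanish on nonzero elements of $\Ga_+$.

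This last step uses only the definition of $\Ga_+$ as the semigroup generated by classes $\cl E$ of nonzero objects $E\in\lA$: a nonzero $\ga\in\Ga_+$ is then a finite nonempty sum $\sum\cl E_i$ with each $E_i\neq0$, and each $Z(E_i)$ lies in the semi-closed upper half-plane $\lH$, so it has nonnegative imaginary part, and vanishing imaginary part forces strictly negative real part. A finite nonempty sum of such complex numbers stays in $\lH$, in particular is nonzero, contradicting $Z(\ga)=0$. I do not foresee a real obstacle; the argument is just linear algebra in $c$ combined with the standard nonvanishing of $Z$ on $\Ga_+\setminus\{0\}$, and the only bit of care needed is the case split on $v(\be)\in\{0,1\}$, which is absorbed into the uniform identity $Z(\be)=v(\be)Z(\al)$.
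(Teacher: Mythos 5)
Your proof is correct and follows essentially the same route as the paper's: clear denominators to get a linear equation in $c$, observe the solution is unique unless both the coefficient $r(\al)v(\be)-r(\be)$ and the right-hand side vanish, note $v(\be)\in\{0,1\}$, and rule out the degenerate case by producing a nonzero class ($\be$ or $\al-\be$) in $\Ga_+$ on which $Z$ would vanish. The paper phrases the degenerate case via the auxiliary parameter $s=r(\be)/r(\al)$ and the conclusions $r'=r,\,d'=d$ (resp.\ $r'=d'=0$), while you package both cases into the single identity $Z(\be)=v(\be)Z(\al)$ and spell out why $Z$ cannot vanish on a nonzero element of $\Ga_+$; these are cosmetic differences, and your version is if anything a bit more explicit about the final contradiction.
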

\begin{proof}
Let $Z(\al)=-d+ir$, $Z(\be)=-d'+ir'$, $v(\al)=v$, $v(\be)=v'$. Assume that there exist $c\ge0$, $s\ge0$ such that
$$d'+cv'=s(d+cv),\qquad r'=sr.$$
Then $s=r'/r\le1$ is uniquely determined and we have
$$c(v'-vr'/r)=d'-dr'/r.$$
If $v'-vr'/r\ne0$ then $c$ is uniquely determined. Assume that $v'r=vr'$. Then also $d'r=dr'$. If $v'=1$ then $r'=r$ and therefore $d'=d$. This contradicts our assumption $\be<\al$. If $v'=0$ then also $r'=0$ and therefore $d'=0$. This contradicts our assumption $\be>0$.
\end{proof}

\begin{crl}
Let $\al\in\Ga_+$ be such that  $v(\al)=1$. Then there exist only a finite number of $c\in\cR$ which are not \al-generic.
\end{crl}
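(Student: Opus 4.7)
The corollary should follow essentially immediately from Lemma \ref{unique} combined with the finiteness assumption on the number of elements $\be$ with $0<\be<\al$.

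My plan is as follows. First, I would unpack the definition: $c\in\cR$ fails to be \al-generic precisely when there exists some $0<\be<\al$ in $\Ga_+$ with $\vi_c(\be)=\vi_c(\al)$, equivalently (using the translation between phase and slope spelled out in the remark after the definition of stability function) $\mu_c(\be)=\mu_c(\al)$. Thus the set of non-\al-generic real numbers is the union
\[
N_\al=\bigcup_{0<\be<\al}\sets{c\in\cR}{\mu_c(\be)=\mu_c(\al)}.
\]

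Next, I apply Lemma \ref{unique}: for each fixed $\be$ with $0<\be<\al$, the set in the union above consists of at most one real number, because $v(\al)=1$ is exactly the hypothesis of that lemma. Finally, I invoke the standing assumption (fixed at the end of the subsection on stability functions) that for any $\al\in\Ga_+$ the set $\sets{\be\in\Ga}{0<\be<\al}$ is finite. Therefore $N_\al$ is a finite union of sets each having at most one element, hence finite.

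There is essentially no obstacle here; the whole content is to notice that Lemma \ref{unique} bounds each $\be$-contribution by one point and that the indexing set of $\be$'s is finite by the general hypothesis on $\Ga_+$. The only thing to double-check is that the equivalence ``not \al-generic $\iff\exists\,0<\be<\al$ with $\mu_c(\be)=\mu_c(\al)$'' is legitimate at $c\in\cR$: this is fine because $Z_c$ is again a stability function (by the assumed $v$-compatibility, which guarantees $Z_c$ still takes values in $\cH$ on nonzero objects), so its phases and slopes are well-defined and the Definition \ref{generic} can be applied to $Z_c$ directly.
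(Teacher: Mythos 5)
Your proof is correct and follows the same route as the paper: the paper's proof simply says to apply Lemma~\ref{unique} together with the finiteness assumption on $\{\be : 0<\be<\al\}$, and your argument spells out exactly that union-over-$\be$ decomposition. The extra sanity check at the end (that $Z_c$ is again a stability function thanks to $v$-compatibility, so \al-genericity makes sense) is a reasonable thing to note explicitly, but it does not change the substance.
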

\begin{proof}
We apply the previous lemma together with an assumption that there exists just a finite number of elements $0<\be<\al$. 
\end{proof}

\begin{dfn}
Let $E\in\lA$ be an object with $v(E)=1$ and let $c\in\cR$. We say that $E$ is $c^+$-stable (or $Z_{c^+}$-stable) if it is stable with respect to $c^+=c+\eps$ for $0<\eps\ll1$. In the same way we define $c^-$-stable (or $Z_{c^-}$-stable) objects.
We say that $E$ is $+\infty$-stable (or $Z_{+\infty}$-stable) if $E$ is stable with respect to $c\gg0$. In the same way we define $-\infty$-stable (or $Z_{-\infty}$-stable) objects.
\end{dfn}

\begin{rmr}
If $\al\in\Ga_+$ is such that $v(\al)=1$, then $c^+$ and $c^-$ are automatically \al-generic. Therefore the notions of $c^\pm$-stability and $c^\pm$-semistability coincide for the objects having class \al.
\end{rmr}

\begin{lmm}
\label{basic properties}
Let $E\in\lA$ be an object with $v(E)=1$.
\begin{enumerate}
	\item If $E$ is $c$-stable then it is $c^\pm$-stable.
	\item If $E$ is $c^+$-stable or $c^-$-stable then it is $c$-semistable.
	\item 
%	If $v(E)=1$ and 
If $E$ is $c^+$-stable and $c^-$-stable then it is $c$-stable.
\end{enumerate}
\end{lmm}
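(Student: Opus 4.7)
The plan is to analyze, for each class $\be\in\Ga_+$ with $0<\be<\al$ (where $\al=\cl E$), the real-valued function $f_\be(c):=\mu_c(\al)-\mu_c(\be)$. Since $v(\al-\be)\ge 0$, any $\be$ arising as the class of a nonzero proper subobject $F\subsetneq E$ has $v(\be)\in\set{0,1}$. Writing $Z(\al)=-d+ir$, $Z(\be)=-d'+ir'$, the expression $f_\be(c)=(d+c)/r-(d'+cv(\be))/r'$ is affine in $c$. Following (and slightly strengthening) the proof of Lemma~\ref{unique}, I would check that $f_\be$ is never identically zero: if $v(\be)=0$ the slope is $1/r>0$; if $v(\be)=1$ and $r\ne r'$ the slope is nonzero; the remaining case $v(\be)=1$, $r=r'$, $d=d'$ would give $Z(\al-\be)=0$, impossible for a nonzero quotient under a stability function. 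Hence $f_\be$ is either a nonzero constant or a non-constant affine function.

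Next I would note that under any of the hypotheses of (1)--(3), every nonzero proper $F\subsetneq E$ satisfies $r(F)>0$. Indeed $v(E)=1$ and $v$-compatibility give $r(E)>0$, so $\mu_{c'}(E)<\infty$ for any $c'$; if $r(F)=0$ then $v(F)=0$ and $\mu_{c'}(F)=\infty$, contradicting (semi)stability at any of $c$, $c+\eps$, or $c-\eps$ for small $\eps>0$. Thus $\mu_c(F)$ is finite for every relevant $F$, and (semi)stability of $E$ reads off directly from the sign of $f_{\cl F}$. Combined with the standing finiteness of the set of classes $\be$ with $0<\be<\al$, this reduces the whole question to the behaviour of finitely many affine functions $f_\be$ at the points $c$ and $c\pm\eps$.

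For (1), $c$-stability gives $f_\be(c)>0$ for each relevant $\be$; continuity and finiteness produce $\eps_0>0$ with $f_\be(c\pm\eps)>0$ for all $0<\eps<\eps_0$, proving $c^+$- and $c^-$-stability. For (2), assuming $c^+$-stability, $f_\be(c+\eps)>0$ for all small $\eps>0$, and continuity gives $f_\be(c)\ge 0$, hence $c$-semistability (the $c^-$ case is symmetric). For (3), part (2) already supplies $c$-semistability; if $f_\be(c)=0$ for some $\be$, then $f_\be$ is necessarily a non-constant affine function vanishing at $c$, so it changes sign there, forcing $f_\be(c+\eps)<0$ or $f_\be(c-\eps)<0$ for small $\eps>0$, which contradicts $c^+$- or $c^-$-stability respectively.

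The main technical point is the non-vanishing of $f_\be$ as an affine function, resolved via the case analysis on $v(\be)\in\set{0,1}$ together with the observation that $Z(\al-\be)=0$ is forbidden for a nonzero quotient. Everything else is a standard continuity-and-finiteness argument on finitely many affine functions of the real parameter $c$.
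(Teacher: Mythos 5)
Your proof is correct. It uses essentially the same ingredients as the paper (finiteness of $\{\be : 0<\be<\al\}$, continuity of slopes in $c$, and the non-vanishing fact underlying Lemma~\ref{unique}), but reorganizes them into a more uniform argument by phrasing everything in terms of the affine functions $f_\be(c)=\mu_c(\al)-\mu_c(\be)$. The paper instead proves (1) and (3) by ad hoc limiting arguments with separate case-splits on $v(F)\in\{0,1\}$, using subobjects $F$ for the $c^-$ direction and quotients $E/F$ for the $c^+$ direction. Your dichotomy "nonzero constant or sign-changing non-constant affine" absorbs that case analysis cleanly and dispenses with the quotient trick; note that no "slight strengthening" of Lemma~\ref{unique} is actually required, since the assertion that $\mu_c(\be)=\mu_c(\al)$ has at most one solution in $c$ is precisely the statement that the affine function $f_\be$ is not identically zero. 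Your explicit observation that $r(F)>0$ for every proper nonzero subobject $F\sb E$ under any of the stated stability hypotheses is a useful clarification that the paper leaves implicit.
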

\begin{proof}
$1$.
Assume that $E$ is $c$-stable and let $F\sb E$ be its $c^-$-destabilizing subobject. Then
$$\mu_c(F)\ge\mu_{c^-}(F)\ge \mu_{c^-}(E).$$
Taking the limit we get $\mu_c(F)\ge\mu_{c}(E)$
that contradicts to the $c$-stability of $E$.
If $F\sb E$ is $c^+$-destabilizing subobject then
$$\mu_c(E/F)\le\mu_{c^+}(E/F)\le \mu_{c^+}(E).$$
Taking the limit we get $\mu_c(E/F)\le\mu_{c}(E)$
that contradicts to the $c$-stability of $E$.

$2$. We just have to take the limit.

$3$. Assume there exists a $c$-destabilizing subobject $F\sb E$.
If $v(F)=0$ then
$$\mu_{c^-}(F)=\mu_c(F)\ge\mu_c(E)>\mu_{c^-}(E)$$
contradicting to $c^-$-stability of $E$.
If $v(1)=0$ then $v(E/F)=0$ and
$$\mu_{c^+}(E/F)=\mu_c(E/F)\le\mu_c(E)<\mu_{c^+}(E)$$
contradicting to $c^+$-stability of $E$.
\end{proof}

\begin{prp}
\label{main relation}
Let $E\in\lA$ be an object with $v(E)=1$. 
The following conditions are equivalent
\begin{enumerate}
	\item $E$ is $c$-semistable.
	\item The HN filtration of $E$ with respect to $c^-$ has the form
		$E'\sb E$ such that $v(E')=0$	and $E',E$ have equal $c$-slopes.
	\item The HN filtration of $E$ with respect to $c^+$ has the form
		$E'\sb E$ such that $v(E')=1$ and $E',E$ have equal $c$-slopes.
\end{enumerate}
\end{prp}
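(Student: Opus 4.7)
The plan is to analyze how the HN filtration with respect to $Z_{c^\pm}$ interacts with the slopes at the wall $Z_c$, using the fact that since $v(E)=1$ and $v\ge 0$, every subobject $F\sbe E$ has $v(F)\in\set{0,1}$. The key identity is
$$\mu_{c'}(F)=\mu_c(F)+(c'-c)\frac{v(F)}{r(F)},$$
valid when $r(F)>0$, so moving $c$ to $c\pm\eps$ shifts only those slopes for which $v(F)>0$.

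For $(1)\imp(2)$, assume $E$ is $c$-semistable and let $0=E_0\sb E_1\sb\dots\sb E_n=E$ be its HN filtration with respect to $c^-$, with factors $F_i=E_i/E_{i-1}$. The $c$-semistability of $E$ gives $\mu_c(E_1)\le\mu_c(E)$ and $\mu_c(E/E_{n-1})\ge\mu_c(E)$; combined with the weak ordering $\mu_c(F_1)\ge\dots\ge\mu_c(F_n)$ obtained by letting $\eps\to 0$ in the strict $c^-$-ordering of the HN slopes, this forces $\mu_c(F_i)=\mu_c(E)$ for every $i$. Substituting into the strict inequalities $\mu_{c^-}(F_i)>\mu_{c^-}(F_{i+1})$ yields $v(F_i)/r(F_i)<v(F_{i+1})/r(F_{i+1})$. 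Since the $v(F_i)\in\set{0,1}$ sum to $1$, exactly one index $j$ has $v(F_j)=1$ and the others vanish; the strict ordering forces $j=n$ and $n\le 2$. So either $E$ is already $c^-$-stable (the degenerate case $E'=0$) or $E':=E_1$ is a proper nonzero subobject with $v(E')=0$ and $\mu_c(E')=\mu_c(E)$. The implication $(1)\imp(3)$ is identical with $c^+$ in place of $c^-$, except the strict ordering now places the framing factor at the top, so in the nontrivial case $E':=E_1$ has $v(E')=1$.

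For the converses I would use the HN polygon bounds attached to the short filtration $0\sb E'\sb E$: every subobject $F\sbe E$ satisfies $\mu_{c^-}(F)\le\mu_{c^-}(E')$ and every quotient $G$ of $E$ satisfies $\mu_{c^-}(G)\ge\mu_{c^-}(E/E')$. From $\mu_c(E')=\mu_c(E)$ and the weighted-average identity $r(E)\mu_c(E)=r(E')\mu_c(E')+r(E/E')\mu_c(E/E')$ one deduces $\mu_c(E/E')=\mu_c(E)$. Now for $F\sbe E$ with $v(F)=0$ one has $\mu_c(F)=\mu_{c^-}(F)\le\mu_{c^-}(E')=\mu_c(E')=\mu_c(E)$; and for $v(F)=1$, setting $G=E/F$ (so $v(G)=0$) gives $\mu_c(G)=\mu_{c^-}(G)\ge\mu_{c^-}(E/E')=\mu_c(E/E')-\eps/r(E/E')$, and letting $\eps\to 0$ yields $\mu_c(G)\ge\mu_c(E)$, equivalently $\mu_c(F)\le\mu_c(E)$. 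This establishes $c$-semistability, and $(3)\imp(1)$ is symmetric, working with $c^+$ and $v(E')=1$ (the framing now sitting in $E'$ rather than in $E/E'$).

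The delicate step is pinning down in $(1)\imp(2)$ that every HN factor of $E$ with respect to $c^-$ carries the common $c$-slope $\mu_c(E)$: this combines $c$-semistability applied to the extreme HN factors with the limit $\eps\to 0$ of the strict HN inequalities. Once this is secured, the constraint $v(F_i)\in\set{0,1}$ summing to $1$ mechanically forces filtration length at most two and pins down the location of the framing factor; the converses then reduce to weighted-average estimates together with the HN polygon bounds.
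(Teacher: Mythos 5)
Your argument is correct, and in the forward direction it is essentially the paper's argument spelled out more carefully: you track the whole HN filtration $0=E_0\sb\dots\sb E_n=E$ with respect to $c^-$, use the limit of the strict HN inequalities together with $c$-semistability applied to the extreme pieces to force all factors to have the common $c$-slope, and then read off $n\le 2$ and the location of the framing factor from the constraint $\sum v(F_i)=1$, $v(F_i)\in\set{0,1}$. The paper is more compact: it takes a single $c^-$-destabilizing subobject $E'$, writes the two inequalities $\mu_{c^-}(E')>\mu_{c^-}(E)$ and $\mu_c(E')\le\mu_c(E)$ in the coordinates $d,r,v$, takes the limit $c^-\to c$ to get equality of $c$-slopes, and subtracts the inequalities to get $v'r<vr'$, hence $v'=0$; it then notes in one line that this forces a two-step HN filtration. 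Both routes are the same idea, yours is just longer-winded.

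The genuinely different route is in the converse $(2)\imp(1)$. You argue via HN-polygon bounds: for any subobject $F\sbe E$ you have $\mu_{c^-}(F)\le\mu_{c^-}(E')$, and for any quotient $G$ you have $\mu_{c^-}(G)\ge\mu_{c^-}(E/E')$, and you push these through the weighted-average identity and the limit $\eps\to 0$ separately in the cases $v(F)=0$ and $v(F)=1$. This is correct and self-contained. The paper instead leans on Lemma~\ref{basic properties}: the HN factors $E'$ and $E/E'$ are $c^-$-semistable, hence $c$-semistable (for $E'$ because $v(E')=0$ makes the two notions coincide, for $E/E'$ by part (2) of that lemma), and since they share the same $c$-slope, $E$ is an extension of $c$-semistable objects of equal slope and is therefore $c$-semistable. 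That is shorter because it reuses a lemma already at hand; your approach trades that for an elementary HN-polygon estimate. Both are valid, and the only thing you glossed over (as does the paper's statement) is the degenerate case where the HN filtration with respect to $c^-$ is trivial, which is handled by treating $E'=0$ as allowed.
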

\begin{proof}
$1\imp2$. 
Let $E'\sb E$ be a $c^-$-destabilizing subobject. Let
$$Z(E)=-d+ir,\qquad Z(E')=-d'+ir'.$$
Then
$$(d'+c^-v')r>(d+c^-v)r',\qquad (d'+cv')r\le(d+cv)r'.$$
%Making \eps sufficiently small ($c^-=c-\eps$) 
Taking the limits we deduce that
$$(d'+cv')r=(d+cv)r'.$$
Theefore $\mu_c(E')=\mu_c(E)$.
We also obtain
$$v'r< vr'.$$
This implies $v'=0$ and therefore 
$$\mu_{c^-}(E')=\mu_c(E')=\mu_c(E).$$
All this implies that there is a 2-step HN filtration $E'\sb E$ with respect to $c^-$
(or 1-step, if $E$ is already $c^-$-stable) and that $v(E')=0$.

$2\imp1$.
The objects $E',E/E'$ are $c^-$-semistable and therefore $c$-semistable by Lemma \ref{basic properties} (the notions of $c^-$-semistability and $c$-semistability coincide for $E'$ as $v(E')=0$).
They have the same $c$-slope, so $E$ is also $c$-semistable.

Equivalence of $1$ and $3$ is proved in the same way.
\end{proof}

\begin{rmr}
The above result says in particular that if $E$ is $c$-semistable with $v(E)=1$
then there exists the unique quotient $E\to E''$ such that $v(E'')=1$, $\mu_c(E'')=\mu_c(E)$, and $E''$ is
$c^-$-stable. Also
there exists the unique subobject $E'\sb E$ such that $v(E')=1$, $\mu_c(E')=\mu_c(E)$, and $E'$ is
$c^+$-stable.
\end{rmr}

%\begin{prp}
%Let $E$ be a $c$-semistable object. Then $E$ is $c^-$-semistable if and only if
%\end{prp}
%\begin{proof}
%Let $E'\sb E$ be a $c^-$ destabilizing subobject. Then $$(d'+c^-v')r>(d+c^-v)r',\qquad (d'+cv')r\le(d+cv)r'.$$
%As before, taking the limits we deduce that
%$$(d'+cv')r=(d+cv)r'$$
%and $\mu_c(E')=\mu_c(E)$.
%We also obtain
%$$v'r<vr'.$$
%This means that $\mu_{-v}(E')=-v'/r'>-v/r=\mu_{-v}(E)$ but this contradicts to $Z_{-v}$-semistability of $E$.
%
%Conversely, assume that $E$ is $c^-$-stable and let $E'$ be a $Z_{-v}$-destabilizing object. Then
%$$-v'r>-vr'.$$
%If $(d'+cv')r<(d+cv)r'$ then for $c^-=c-\eps$ with very small $\eps>0$ we will get
%$$(d'+c^-v')r<(d+c^-v)r'.$$
%
%\end{proof}

\begin{prp}
Let $E\in\lA$ be a $c$-semistable object with $v(E)=1$.
\begin{enumerate}
	\item Let $E'\sb E$ be a HN filtration with respect to $c^-$. If $E$ is $c^+$-stable then	$E/E'$ is $c$-stable.
	\item Let $E'\sb E$ be a HN filtration with respect to $c^+$. If $E$ is $c^-$-stable then $E'$ is $c$-stable.
\end{enumerate}
\end{prp}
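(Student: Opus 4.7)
The plan is to deduce both parts from Proposition~\ref{main relation} by a case split on the framing rank of an arbitrary proper nonzero subobject of the candidate stable object. I will describe part~(1) in detail; part~(2) is handled by the dual argument, with the roles of $c^+$ and $c^-$, and of sub- and quotient objects, interchanged.

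By Proposition~\ref{main relation} the $c^-$-HN filtration $E'\sb E$ satisfies $v(E')=0$, $v(E/E')=1$, and $\mu_c(E')=\mu_c(E/E')=\mu_c(E)$, with $E/E'$ being $c^-$-semistable. The degenerate case $E'=0$ reduces at once to Lemma~\ref{basic properties}(3) (then $E=E/E'$ is $c^-$-stable by genericity and $c^+$-stable by hypothesis, hence $c$-stable), so I may assume $E'\ne 0$. Pick any $F$ with $0\ne F\subsetneq E/E'$; I will verify $\mu_c(F)<\mu_c(E)$ in both framing-rank cases. If $v(F)=0$, then $\mu_{c^-}(F)=\mu_c(F)$, whereas $\mu_{c^-}(E/E')=\mu_c(E)-\eps/r(E/E')<\mu_c(E)$, and the $c^-$-semistability of $E/E'$ yields $\mu_c(F)<\mu_c(E)$ at once. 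If $v(F)=1$, let $G$ be the preimage of $F$ in $E$, giving a short exact sequence $0\to E'\to G\to F\to 0$ with $E'\subsetneq G\subsetneq E$ and $v(G)=1$; the $c^+$-stability of $E$ applied to $G$, together with $r(G)\le r(E)$ and $v(G)=v(E)=1$, gives $\mu_c(G)<\mu_c(E)$, after which the weighted-average identity for $\mu_c$ on this extension, combined with $\mu_c(E')=\mu_c(E)$, forces $\mu_c(F)<\mu_c(E)$.

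Part~(2) proceeds by the symmetric argument. Now the $c^+$-HN filtration of $E$ is $E'\sb E$ with $v(E')=1$, $v(E/E')=0$, $\mu_c(E')=\mu_c(E)$, and $E'$ is $c^+$-semistable. For a nonzero $F\subsetneq E'$: if $v(F)=0$, then $c^-$-stability of $E$ applied to $F\sb E$ directly gives $\mu_c(F)<\mu_c(E)$; if $v(F)=1$, then $c^+$-semistability of $E'$ gives $\mu_c(F)\le\mu_c(E')$, and this is strict unless $r(F)=r(E')$, in which case $E'/F$ is a nonzero object of $\lA$ with $r=v=0$ (by $v$-compatibility), so $d(E'/F)>0$ and a direct slope computation yields $\mu_c(F)<\mu_c(E')$.

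The step I expect to be the main obstacle is the bookkeeping in the $v(F)=1$ subcases: passing cleanly between $c^\pm$- and $c$-slopes requires watching the signs of differences such as $1/r(E)-1/r(G)$ and treating the coincidences $r(G)=r(E)$ or $r(F)=r(E')$, where the $v$-compatibility hypothesis on $Z$ is essential for ruling out pathological torsion-type subobjects and securing the strict inequalities that upgrade semistability to stability.
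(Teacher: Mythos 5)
Your proof is correct and follows the same overall strategy as the paper: reduce to checking $\mu_c(F)<\mu_c(E)$ for an arbitrary proper nonzero $F$, split on $v(F)$, and use $c^-$-(semi)stability of the relevant factor in the $v(F)=0$ case and $c^+$-(semi)stability in the $v(F)=1$ case. The only real difference is in the $v(F)=1$ subcase: the paper applies $c^+$-stability to the quotient $E''/F$ of $E$, which has framing rank $0$, so $\mu_{c^+}$ and $\mu_c$ coincide on it and the contradiction is immediate, whereas you apply $c^+$-stability to the preimage subobject (framing rank $1$) and then transfer from $\mu_{c^+}$ to $\mu_c$ via $r(G)\le r(E)$ and a weighted-average identity — both are valid, but the paper's choice of the $v=0$ side avoids the extra bookkeeping.
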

\begin{proof}
Let $E'\sb E$ be a HN filtration with respect to $c^-$. Assume that $E$ is $c^+$-stable.
The object $E''=E/E'$ is $c^-$-semistable and therefore also $c^-$-stable.
Let $F\sb E''$ be a $c$-destabilizing subobject and let $G=E''/F$.

If $v(F)=0$ then
$$\mu_{c^-}(F)=\mu_{c}(F)\ge\mu_{c}(E'')>\mu_{c^-}(E'')$$
that contradicts to $c^-$-stability of $E''$.
If $v(F)=1$ then $v(G)=0$ and
$$\mu_{c^+}(E)>\mu_{c}(E)=\mu_c(E'')\ge\mu_c(G)=\mu_{c^+}(G)$$
that contradicts to $c^+$-stability of $E$.

%Assume now that $E''=E/E'$ is $c$-stable.
%Let $F\sb E$ be a $c^+$ destabilizing object. If $v(F)=0$ then
%$$\mu_{c^+}(F)=\mu_c(F)\le\mu_c(E)<\mu_{c^+}(E).$$
%This implies that $v(F)=1$.
%We have $v(E')=0$, so $v(E'\cap F)=0$ and
%$$\mu_{c^+}(F\cap E')=\mu_c(F\cap E')\le\mu_c(E)<\mu_{c^+}(E).$$
%On the other hand it follows from the $c$-stability of $E/E'$ that
%either $F/F\cap E'=E/E'$ (and then $\mu_{c^=}$)
\end{proof}

\begin{rmr}
The converse statements of the proposition seem to be false.
The proposition generalizes \cite[Prop.~3.8]{nagao_counting}, where the authors suppose that the category of semistable objects $E$ with a fixed
slope and $v(E)=0$ is semisimple with one simple object.
\end{rmr}

\begin{prp}
\label{infty stability}
Let $Z=-d+ir$ be a stability function such that $r(\al)>0$ for any $0\ne\al\in\Ga_+$. Let $E\in\lA$ be an object with $v(E)=1$. Then
\begin{enumerate}
	\item $E$ is $+\infty$-stable if and only if for any subobject $E'\sb E$ we have $v(E')=0$.
	\item $E$ is $-\infty$-stable if and only if for any subobject $E'\sb E$ we have $v(E')=1$.
\end{enumerate}
\end{prp}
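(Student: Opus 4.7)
The plan is to reduce the entire proposition to a one-line calculation of the slope difference $\mu_c(E') - \mu_c(E)$ as an affine function of $c$ and then read off its behavior in the limits $c \to \pm\infty$. First I would observe that since $v: K_0(\lA) \to \cZ$ is additive and nonnegative on objects, the identity $v(E) = v(E') + v(E/E')$ with $v(E) = 1$ forces $v(E') \in \{0, 1\}$ for every subobject $E' \sb E$. Writing $Z(E) = -d + ir$ and $Z(E') = -d' + ir'$ and using $v(E) = 1$, a direct computation gives
\[ \mu_c(E') - \mu_c(E) = \frac{(d'r - dr') + c\bigl(v(E')\,r - r'\bigr)}{r'r}. \]
The hypothesis $r(\alpha) > 0$ on $\Ga_+ \setminus \{0\}$ together with $Z(E'') \ne 0$ for every nonzero $E'' \in \lA$ ensures $r, r' > 0$, so the sign for $|c|$ large is controlled by the coefficient of $c$ in the numerator. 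This coefficient equals $-r' < 0$ when $v(E') = 0$, and equals $r - r' = r(E/E') > 0$ when $v(E') = 1$ and $E' \subsetneq E$ is proper (so that $E/E' \ne 0$). The whole proposition is contained in this sign dichotomy.

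For part (1), the forward direction is immediate: if $E$ admits a proper nonzero subobject $E'$ with $v(E') = 1$, then $\mu_c(E') - \mu_c(E) \to +\infty$ as $c \to +\infty$, so $E'$ destabilizes $E$ for all sufficiently large $c$ and $E$ cannot be $+\infty$-stable. For the converse I would assume every proper nonzero subobject satisfies $v(E') = 0$ and produce a uniform threshold as follows. By the standing finiteness assumption on $\Ga_+$, there are only finitely many classes $\beta$ with $0 < \beta < \cl E$; by Lemma \ref{unique}, each such $\beta$ admits at most one $c_\beta \in \cR$ with $\mu_{c_\beta}(\beta) = \mu_{c_\beta}(\cl E)$. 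Taking $C = \max_\beta c_\beta$, the affine function $c \mapsto \mu_c(\beta) - \mu_c(\cl E)$ has constant sign on $(C, \infty)$, and this sign is negative because the $c \to +\infty$ limit is $-\infty$ (using $v(\beta) = 0$). Hence every proper nonzero $E' \sb E$ satisfies $\mu_c(E') < \mu_c(E)$ for all $c > C$, so $E$ is $+\infty$-stable.

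Part (2) I would handle as the mirror image of part (1), exchanging $+\infty \leftrightarrow -\infty$ and $v(E') = 0 \leftrightarrow v(E') = 1$ throughout, invoking the opposite side of the sign dichotomy. The only point requiring care is the uniformity of the threshold in $c$ across all classes of subobjects, and this is supplied precisely by the combination of Lemma \ref{unique} with the finiteness of $\{0 < \beta < \cl E\} \sb \Ga_+$; once that uniformity is in hand, the proof is routine.
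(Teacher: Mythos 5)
Your proposal is correct and takes essentially the same approach as the paper: both reduce to a sign analysis of $\mu_c(E')-\mu_c(E)$ as an affine function of $c$, using $r,r'>0$ and $v(E')\in\{0,1\}$ to determine the sign of the coefficient of $c$. The only cosmetic difference is that you make the uniformity of the threshold explicit via Lemma \ref{unique} and the finiteness of $\{0<\beta<\cl E\}$, whereas the paper handles the degenerate case $v'r=vr'$ directly and leaves the (equally valid) uniformity step implicit.
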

\begin{proof}
Let $0<\be<\al$ and let $Z(\al)=-d+ir$, $Z(\be)=-d'+ir'$, $v(\al)=v$, $v(\be)=v'$. Then $\mu_c(\be)<\mu_c(\al)$ means
$$\frac{d'+cv'}{r'}<\frac{d+cv}{r}$$
or equivalently
$$c(v'r-vr')<dr'-d'r.$$
This holds for $c\gg0$ if and only if either $v'r<vr'$ or $v'r=vr'$ and $d'r<dr'$.
In the second case if $v'=0$ then $r'=0$ and therefore $\be=0$. And if $v'=1$ then $r'=r$ and $\be=\al$. Both these cases are impossible. Therefore we have $v'r<vr'$.
This is true if and only if $v'=0$.

The above inequality holds for $c\ll0$ if and only if either $v'r>vr'$ or $v'r=vr'$ and $d'r<dr'$. The second case is again impossible. Therefore $v'r>vr'$, or equivalently $v'=1$.
\end{proof}

In the following example we will see that the requirement in the previous proposition that $r(\al)>0$ for any $0\ne\al\in\Ga_+$ is important.

\begin{exm}
\label{exm:toda}
Let $X$ be a projective $3$-Calabi-Yau manifold with $H^1(X,\lO_X)=0$. Following \cite{toda_curve} we define
$$\lD_X=\ang{\lO_X,\Coh_{\le1}(X)}_{\rm tr}\sb D^b(\Coh(X))$$
and
$$\lA_X=\ang{\lO_X,\Coh_{\le1}(X)[-1]}_{\rm ex}\sb\lD_X.$$
The category $\lA_X$ is the heart of a bounded t-structure on $\lD_X$ \cite[Lemma 3.5]{toda_curve}. The category $\lA_X$ contains a full abelian subcategory $\lA$ of morphisms $f:V\ts \lO_X\to F$, where $V$ is a finite-dimensional vector space and $F\in\Coh_{\le1}(X)$.
Define the map
$$\cl:K_0(\lD_X)=K_0(\lA_X)\to \Ga=H^0(X,\cZ)\oplus H^4(X,\cZ)\oplus H^6(X,\cZ)$$
by the rule $E\mto(\ch_0E,\ch_2E,\ch_3E)$ (note that for any $E\in\lD_X$, $\ch_1E=0$). We will identify $H^0(X,\cZ)$ and $H^6(X,\cZ)$ with \cZ. We define the framing rank $v:\Ga\to\cZ$, $(v,\be,n)\to v$.
To deal with elements in $H^4(X,\cZ)$ we choose an ample class $\om\in H^2(X,\cZ)$. Then for any $\be\in H^4(X,\cZ)$ we have $\om\be\in H^6(X,\cZ)=\cZ$. We define a stability function $Z:\Ga\to\cC$ by the rule
$$Z(v,-\be,-n)=-n+i(\om\be+v).$$
Then an object $E\in\lA_X$ with $v(E)=1$ is ${+\infty}$-stable if and only if it is a PT morphism, \ie $E$ has the form $\lO_X\arr^f F$, where $F$ is pure of dimension $1$ and $\coker f$ is zero-dimensional.

Indeed, let $\cl(E)=(1,-\be,-n)$ and assume that $E$ is ${+\infty}$-stable. 
An object $E\in\lA_X$ with $v(E)=1$ has a filtration $E_0\sb E_1\sb E$ such that
$E_0,E/E_1\in\Coh_{\le1}[-1]$ and $E_1/E_0=\lO_X$ (see \cite[Lemma 3.11]{toda_curve}). Assume that $E/E_1=S[-1]$ has dimension one. Then $Z(E/E_1)=(0,-\be',-n')$ with $\be'\ne0$. Therefore
$$\mu_c(E)=\frac{c+n}{\om\be+1}>\frac{n'}{\om\be'}=\mu_c(E/E_1)$$
for $c\gg0$. This contradicts to ${+\infty}$-stability of $E$. Therefore $S$ has dimension zero. This implies that $$\Ext^1(E/E_1,E_1/E_0)=\Ext^1(S[-1],\lO_X)=\Ext^1(\lO_X,S)\dual=0$$ and we can rearrange our filtration in such a way that $E/E_1=0$. Then $E$ can be written as an object $f:\lO_X\to F$ in \lA with $E_0=F[-1]\sb E$.
If $F$ is not pure, then there exists a nonzero subsheaf $S\sb F$ of dimension $0$. Then, for the object $E'=S[-1]\sb E$ we have $\mu_c(E')=+\infty>\mu_c(E)$ and this contradicts to ${+\infty}$-stability of $E$. If $S=\coker f$ then $S[-1]$ is a quotient of $E$, and we have seen that this implies that $S$ has dimension zero.

Conversely, assume that $E=[\lO_X\arr^f F]$ is a PT morphism and let $E'\sb E$ be a proper subobject in $\lA_X$. Using an argument similar to the one described above we can show that $E'$ is an object in \lA. If $v(E')=0$ then $E'=S[-1]$ for the subsheaf $S\sb F$ of dimension one. Then $\mu_c(E')$ is finite and independent of $c$ and therefore $\mu_c(E')<\mu_c(E)$ for $c\gg0$. If $v(E')=1$ then $E/E'=S[-1]$ for a sheaf $S$ which is a quotient of $\coker f$. In particular, $S$ has dimension zero and therefore $\mu_c(E/E')=+\infty>\mu_c(E)$. This implies that $E$ is $+\infty$-stable.

Criterion of $+\infty$-stability proved in this example clearly differs from the criterion in Proposition \ref{infty stability}. And the reason for this is that there exist nonzero objects $E\in\lA_X$ such that $r(E)=0$.
\end{exm}

\section{Wall-crossing formulas}
\subsection{Relations in the Hall algebra}
\label{hall alg relations}
We use the same conventions as in the previous section. Results in this section will be formulated for the motivic Hall algebra of the ind-constructible abelian category. But they can be also formulated for the classical Ringel-Hall algebra of a finitary abelian category defined over a finite field.
%Let $H$ be the Hall algebra of $\lA$. It is graded by $\Ga_+$ and we denote by $\hall$ the corresponding completion (\cf Definition \ref{completion}).

\begin{dfn}
For any $\al\in\Ga_+$ with $v(\al)=0$ define
$$\tl \bb^Z_\al=[\lM_\al^\sst\to\lM]\in\hall_\al,$$
where $\lM_{\al}^\sst$ is the stack of $Z$-semistable objects having class \al. 
\rem{is independent of $c$ and we denote it by $\tl \bb_\al$.}
For any $c\in\cR\cup\set{\pm\infty}$ and $\al\in\Ga_+$ with $v(\al)=1$ define
$$\tl A^{Z}_{c,\al}=[\lM_{c,\al}^\sst\to\lM]\in\hall_\al,$$
where $\lM_{c,\al}^\sst$ is the stack of $Z_c$-semistable objects having class \al. 
\rem{$=\sum_{\over{M\ c\text{-sst}}{\cl(M)=\al}}[M]\in\hall.$}
Define similarly $\tl A_{c^\pm,\al}^Z\in\hall_\al$.
\end{dfn}

\begin{rmr}
If we would work over the classical Ringel-Hall algebra of a finitary category linear over a finite field, then we would define the element $\tl \bb_\al^Z$ as
$$\tl \bb_\al^Z=\sum_{\over{M\text{ is }Z\text{-sst}}{\cl(M)=\al}}[M].$$
The above definition is just a motivic analogue of this sum. 
\end{rmr}

\begin{dfn}
For any $\mu\in\cR$ define
$$\tl \bb^Z_\mu=\sum_{\over{\mu(\al)=\mu}{v(\al)=0}}\tl \bb_\al^Z\in\hall.$$
For any $c\in\cR$ and $\mu\in\cR$ define
$$\tl A_{c,\mu}^{Z}=\sum_{\over{\mu_c(\al)=\mu}{v(\al)=1}}\tl A_{c,\al}^Z\in\hall,\qquad
\tl A_{c^\pm,\mu}^{Z}=\sum_{\over{\mu_c(\al)=\mu}{v(\al)=1}}\tl A_{c^\pm,\al}^Z\in\hall.$$
Define
$$\tl A_{\pm\infty}^Z=\sum_{v(\al)=1} \tl A_{\pm\infty,\al}^Z\in\hall.$$
We will omit the index $Z$ if the stability function is clear from the context.
\end{dfn}

\begin{rmr}
The element $\tl A_{c^+,\mu}^{Z}$ should not be confused with $\tl A_{c+\eps,\mu}^{Z}$ for $0<\eps\ll1$. The first one is a sum over objects satisfying $\mu_c(E)=\mu$, while the second one is a sum over objects satisfying $\mu_{c+\eps}(E)=\mu$.
\end{rmr}

%\sum_{\over{M\ c\text{-sst}}{\mu_c(M)=\mu,v(M)=1}}[M],\qquad
%\tl \bb_\mu=\sum_{\over{M\ \text{sst}}{\mu(M)=\mu,v(M)=0}}[M].$$
%In the same way we define
%$$\tl A_{c^+,\mu}=\sum_{\over{M\ c^+\text{-st}}{\mu_c(M)=\mu,v(M)=1}}[M],\qquad
%\tl A_{c^-,\mu}=\sum_{\over{M\ c^-\text{-st}}{\mu_c(M)=\mu,v(M)=1}}[M].$$
The following wall-crossing formula follows from Proposition \ref{main relation}.

\begin{thr}
\label{hall wall cross}
For any $c\in\cR$ and $\mu\in\cR$ we have
$$\tl A_{c,\mu}=\tl \bb_\mu \tl A_{c^-,\mu}=\tl A_{c^+,\mu}\tl \bb_\mu.$$
\end{thr}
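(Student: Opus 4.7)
The plan is to derive both equalities as direct consequences of Proposition~\ref{main relation}, reading each equation as a bijective correspondence between the underlying moduli stacks and then passing to motives.

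First I would unpack the product $\tl\bb_\mu \cdot \tl A_{c^-,\mu}$. By definition of the motivic Ringel multiplication in $\hall$, this element is the motive (in $\mot(\lM)$) of the stack of short exact sequences $0 \to E' \to E \to E'' \to 0$ in $\lA$ with $E'$ a $Z$-semistable unframed object of slope $\mu$ and $E''$ a $c^-$-stable framed object of $c$-slope $\mu$, considered via the "middle term" map $(E' \subset E) \mapsto E$. A direct slope computation shows any such extension satisfies $v(E) = 1$ and $\mu_c(E) = \mu$, so the image lands in $\bigsqcup \lM^\sst_{c,\al}$, with $\al$ running over classes with $v(\al) = 1$ and $\mu_c(\al) = \mu$.

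Next I would invoke Proposition~\ref{main relation}(2): for $E$ with $v(E) = 1$, being $c$-semistable of $c$-slope $\mu$ is equivalent to the existence of a unique $c^-$-HN filtration $0 \subset E' \subset E$ with $v(E') = 0$ whose graded pieces both have $c$-slope $\mu$. Since $v(E') = 0$, the functions $Z_{c^-}$ and $Z$ agree on $E'$, so $c^-$-semistability of $E'$ is just $Z$-semistability; and since $c^-$ is automatically $\cl(E/E')$-generic for $v(E/E') = 1$, the $c^-$-semistability of $E/E'$ upgrades to $c^-$-stability. Uniqueness of HN filtrations then identifies the stack of short exact sequences above with $\bigsqcup_{\mu_c(\al)=\mu,\,v(\al)=1} \lM^\sst_{c,\al}$, and passing to $\mot(\lM)$ gives the first equality. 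The second equality $\tl A_{c,\mu} = \tl A_{c^+,\mu} \cdot \tl\bb_\mu$ follows by the symmetric argument via Proposition~\ref{main relation}(3), with the framed piece now appearing as subobject and the unframed $Z$-semistable piece as quotient.

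The main obstacle is the passage from a pointwise identification of isomorphism classes to an equality of motives in $\mot(\lM)$: one needs the $c^\pm$-HN filtration to vary constructibly over an ind-constructible family of $c$-semistable framed objects, so that the forgetful map from the stack of short exact sequences to the stack of middle terms is an isomorphism of ind-constructible stacks and not merely a bijection on $\lC$-points. In the framework of ind-constructible abelian categories (cf.\ the setup preceding Proposition~\ref{reineke map}) this constructibility is standard, and it is the only input beyond Proposition~\ref{main relation}.
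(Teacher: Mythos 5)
Your proposal is correct and follows the same route as the paper, which simply asserts that the theorem ``follows from Proposition~\ref{main relation}'': you unpack the Hall product as a stack of filtrations and then use the equivalence in Proposition~\ref{main relation} (together with uniqueness of the $c^\pm$-HN filtration and the fact that $Z_{c^\pm}$ is automatically generic on classes of framing rank $1$) to match it with the stack of $c$-semistable framed objects. Your aside on constructibility of the HN stratification is exactly the implicit technical input in the ind-constructible setting.
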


\subsection{Relations in the quantum torus}
Let $\Ga^0=\ker (v:\Ga\to\cZ)$. We assume that $v:\Ga\to\cZ$ is surjective and there is a canonical splitting $\Ga=\Ga^0\oplus\cZ$. Moreover we assume that $\Ga_+=\Ga_+^0\xx\cN$, where $\Ga_+^0=\Ga_+\cap\Ga^0$.

\begin{rmr}
\label{A_*}
Consider the element $(0,1)\in\Ga_+^0\xx\cN=\Ga_+$.
It follows from our assumptions that there are no elements $0<\be<(0,1)$. This implies that $\tl A^Z_{c,(0,1)}$ is independent of $c\in\cR$ (and of stability function $Z$). We will denote it by $\tl A_*$.
\end{rmr}

\begin{rmr}
Let $\qt=\qt_{\Ga}$ be the motivic quantum torus and let \cqt be the completion of \qt with respect to $\Ga_+$.
Let $\cqt^0\sb \cqt$ be the subalgebra consisting of elements having their degrees in $\Ga^0_+$. We define $x_*=x^{(0,1)}\in \cqt$, where $(0,1)\in\Ga^0_+\xx\cN$.
\end{rmr}

Assume now that there is an algebra homomorphism
$$I:\hall\to \cqt$$
respecting the \Ga-grading. We will call it the integration map (later we will deal with several examples of such map).

\begin{dfn}
\label{invar}
For any $\al\in\Ga^0_+$ define unframed invariants
$$\bb_\al^Z =I(\tl \bb_\al^Z)/x^\al\in\lV.$$
Define $A_*=I(\tl A_*)/x_*\in\lV$.
For any $c\in\cR\cup\set{\pm\infty}$ and $\al\in\Ga^0_+$ define framed invariants $A_{c,\al}^Z\in\lV$ by
% (we use here commutative multiplication in $\cqt$)
\rem{we don't define $A_{c^\pm,\al}$ as it is covered by $A_{c,\al}$}
$$A_{c,\al}^Z=I(\tl A_{c,(\al,1)}^Z)/I(\tl A_*)x^\al.$$
For any $c\in\cR$ and $\mu\in\cR$ define
$$B_\mu^Z=I(\tl B_\mu^Z)\in\cqt^0,\qquad A_{c,\mu}^Z=I(\tl A_{c,\mu}^Z)/I(\tl A_*)\in\cqt^0$$
and analogously define 
%A_{c^\pm,\mu}^Z=I(\tl A_{c^\pm,\mu}^Z)/I(\tl A_*)
%%=\sum_{\mu_{Z_c}(\al,1)=\mu}A_{c,\al}^Zx^\al\in\cqt^0
%,\quad A_{\pm\infty}^Z=I(\tl A_{\pm\infty}^Z)/I(\tl A_*)
$A_{c^\pm,\mu}^Z,A_{\pm\infty}^Z\in\cqt^0$.
We will omit the index $Z$ if the stability function is clear from the context. 
\rem{we want actually $\frac{\cL-1}{-\cL^\oh}S_\nu A$. But application of $S_\nu$ will require changing of $B$, which is not good. So we just need to multiply with $\frac{\cL-1}{-\cL^\oh}$.}
\end{dfn}
%\begin{dfn}
%For any $\mu\in\cR$ define
%$$\bb_\mu^Z=I(\tl \bb_\mu^Z)\in\cqt^0.$$
%For any $c\in\cR$ and $\mu\in\cR$ define $A_{c,\mu}^Z,A_{c^\pm,\mu}^Z\in\cqt^0$ by
%$$A_{c,\mu}^Zx_*=I(\tl A_{c,\mu}^Z),\qquad A_{c^\pm,\mu}^Zx_*=I(\tl A_{c^\pm,\mu}^Z).$$
%Define $A_{\pm\infty}^Z\in\cqt^0$ by 
%$$A_{\pm\infty}^Zx_*=I(\tl A_{\pm\infty}^Z).$$
%We will omit the index $Z$ if the stability function is clear from context. 
%\rem{we want actually $\frac{\cL-1}{-\cL^\oh}S_\nu A$. But application of $S_\nu$ will require changing of $B$, which is not good. So we just need to multiply with $\frac{\cL-1}{-\cL^\oh}$.}
%\end{dfn}

\begin{dfn}
\label{def:nu}
For any group homomorphism $\la:\Ga\to \cZ$, define the algebra homomophism $$S_\la:\cqt\to \cqt,\qquad x^\al\mto (-q^\oh)^{\la(\al)}x^\al.$$
Define $\nu:\Ga\to\cZ$ by
$$\nu(\al)=\ang{\al,(0,1)},\qquad\al\in\Ga.$$
\end{dfn}

\begin{rmr}
It follows from
$$x^\al\circ x^\be=(-q^\oh)^{\ang{\al,\be}}x^{\al+\be}=q^{\ang{\al,\be}}x^\be\circ x^\al,\qquad \al,\be\in\Ga$$
that for any $f\in \cqt$ we have 
$$f\circ x^\be=x^\be S_{\ang{-,\be}}f=x^\be\circ S_{2\ang{-,\be}}f.$$
In particular
$$fx_*=S_{-\nu}f\circ x_*=x_*\circ S_{\nu}f.$$
\end{rmr}

\begin{thr}[Wall-crossing formula]
\label{wall-crossing thr}
For any $c\in\cR$ and $\mu\in\cR$ we have
$$A_{c,\mu}
=S_{\nu} \bb_\mu \circ A_{c^-,\mu}
=A_{c^+,\mu}\circ S_{-\nu} \bb_\mu$$
\end{thr}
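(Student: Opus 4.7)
My plan is to derive the quantum-torus wall-crossing formula directly from its Hall-algebra counterpart, Theorem~\ref{hall wall cross}, by applying the integration map $I$ and carefully tracking the $S_{\pm\nu}$ twists that arise when passing between the subalgebra $\cqt^0$ and its framed counterpart $\cqt^0\cdot x_*$. Since $I$ is a graded algebra homomorphism, applying it to the relations $\tl A_{c,\mu} = \tl B_\mu \tl A_{c^-,\mu} = \tl A_{c^+,\mu} \tl B_\mu$ immediately yields
$$I(\tl A_{c,\mu}) = B_\mu \circ I(\tl A_{c^-,\mu}) = I(\tl A_{c^+,\mu}) \circ B_\mu,$$
where $B_\mu = I(\tl B_\mu)\in \cqt^0$ and each $I(\tl A_{c,\mu})$ lies in the framing-rank-one part of $\cqt$.

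Next I unpack Definition~\ref{invar} by summing the per-degree formula $I(\tl A_{c,(\al,1)}) = A_{c,\al}\cdot I(\tl A_*)\cdot x^\al$ over all $\al$ with $\mu_c(\al)=\mu$ and $v(\al)=0$. Writing $I(\tl A_*) = A_*\, x_*$ for an invertible scalar $A_*\in\lV$, this gives
$$I(\tl A_{c,\mu}) = A_*\cdot A_{c,\mu}\cdot x_* = A_*\,(S_{-\nu} A_{c,\mu})\circ x_*,$$
where the second equality invokes the rule $f\cdot x_* = S_{-\nu}f\circ x_*$ from the remark preceding the theorem --- the precise dictionary between the untwisted concatenation used in Definition~\ref{invar} and the noncommutative product of $\cqt$. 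The same formula holds with $c$ replaced by $c^\pm$.

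Substituting into the first integrated identity, cancelling the invertible scalar $A_*\in\lV$, and then cancelling $x_*$ on the right (injective, as $x_*$ is invertible in $\cqt$) gives $S_{-\nu} A_{c,\mu} = B_\mu \circ S_{-\nu} A_{c^-,\mu}$; applying the algebra automorphism $S_\nu$ then yields the first claim $A_{c,\mu} = S_\nu B_\mu \circ A_{c^-,\mu}$. For the second equation the analogous substitution reduces the problem to commuting $x_*$ across $B_\mu$: a direct computation from $x_*\circ x^\al = (-q^\oh)^{-\nu(\al)} x^{(\al,1)} = (-q^\oh)^{-2\nu(\al)}\, x^\al\circ x_*$ yields $x_*\circ B_\mu = S_{-2\nu}(B_\mu)\circ x_*$, and the same cancellations together with one application of $S_\nu$ deliver $A_{c,\mu} = A_{c^+,\mu}\circ S_{-\nu} B_\mu$.

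The only real obstacle is bookkeeping: the asymmetric placement of $S_\nu$ versus $S_{-\nu}$ on the two sides of the formula reflects precisely the two equivalent identities $f\cdot x_* = S_{-\nu}f\circ x_* = x_*\circ S_\nu f$ of the remark preceding the theorem, since $B_\mu$ sits on opposite sides of the framed factor $x_*$ in the two Hall-algebra equations. Once the translation between ``$\cdot$'' and ``$\circ$'' is made explicit by this remark, the proof amounts to one substitution and one twist per side.
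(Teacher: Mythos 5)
Your proof is correct and follows essentially the same route as the paper: apply the graded algebra homomorphism $I$ to Theorem~\ref{hall wall cross}, rewrite both sides via $fx_*=S_{-\nu}f\circ x_*$ and $x_*\circ g=S_{-2\nu}g\circ x_*$, cancel the central scalar $A_*$ and the invertible $x_*$, and finish by applying $S_\nu$. The only difference is that you spell out the bookkeeping through Definition~\ref{invar} and the scalar $A_*$ explicitly, which the paper leaves implicit.
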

\begin{proof}
It follows from Theorem \ref{hall wall cross} that
$$A_{c,\mu}x_*=\bb_\mu \circ A_{c^-,\mu}x_*=A_{c^+,\mu}x_*\circ \bb_\mu.$$
Expressions in this equations can be written as follows.
$$A_{c,\mu}x_*=S_{-\nu}A_{c,\mu}\circ x_*,$$
$$\bb_\mu \circ A_{c^-,\mu}x_*=\bb_\mu \circ S_{-\nu}A_{c^-,\mu}\circ x_*,$$
$$A_{c^+,\mu}x_*\circ \bb_\mu=S_{-\nu}A_{c^+,\mu}\circ x_*\circ \bb_\mu
=S_{-\nu}A_{c^+,\mu}\circ S_{-2\nu}\bb_\mu \circ x_*.$$
This implies
$$S_{-\nu}A_{c,\mu}=\bb_\mu \circ S_{-\nu}A_{c^-,\mu}
=S_{-\nu}A_{c^+,\mu}\circ S_{-2\nu}\bb_\mu .$$
Now we apply the operator $S_{\nu}$.
\end{proof}

\subsection{Uniform notation}
In this section we will formulate our results using a version of notation by Nagao and Nakajima \cite{nagao_counting}. Our wall-crossing formulas have a particularly nice form in this notation. We will always assume that stability functions considered in this section have the HN property.

Let $Z=-d+ir$ be a stability function.
For any $\al\in\Ga_+^0$, $c\in\cR$, we have defined invariants $A_{c,\al}^Z,\bb^Z_\al\in\lV$.

\begin{dfn}
For any $\al\in\Ga_+^0$ define the framed invariant
$$A^Z_\al:=A^Z_{-d(\al),\al}\in\lV.$$
Equivalently, $A^Z_\al=A^Z_{c,\al}$, where $c\in\cR$ is such that $\mu_{Z_c}(\al,1)=0$. Define the series
$$A^Z=\sum_{\al\in\Ga_+^0} A^Z_{\al}x^\al\in\cqt^0.$$
\end{dfn}

For any $a\in\cR$ define the new stability function $$Z^a=Z+ar.$$
The corresponding slope function is given by
$$\mu_{Z^a}(\al)=\frac{d(\al)-ar(\al)}{r(\al)}=\mu_Z(\al)-a.$$

\begin{rmr}
Of course $Z^a$-stability is equivalent to $Z$-stability. But the series $A^a:=A^{Z^a}$ depends on $a$. For any $a\in\cR$, $\al\in\Ga_+^0$ we have 
$$A^{Z^a}_\al=A^{Z^a}_{c,\al}=A^Z_{c,\al},$$
where $c=ar(\al)-d(\al)$. 
Assume that $r(\al)>0$ for any $0\ne\al\in\Ga_+^0$. If $a\gg0$ then $c\gg0$ and we have then $A^{Z^a}_\al=A^Z_{+\infty,\al}$. In the same way for $a\ll0$ we have $A^{Z^a}_\al=A_{-\infty,\al}^Z$. Note that $A_{\pm\infty}^Z$ are independent of $Z$ by Proposition \ref{infty stability}.
\end{rmr}

Our goal is to study the behavior of the series $A^a=A^{Z^a}$ when we slightly shift $a$. In the same way as before, we can show that for any $\al\in\Ga_+^0$ the invariant $A^{{a+\eps}}_{\al}$ is independent of \eps if $0<\eps\ll1$. We denote it by $A^{{a^+}}_{\al}$ and define $A^{{a^+}}=\sum A^{{a^+}}_{\al}x^\al\in\cqt^0$. In the same way we define $A^{{a^-}}_{\al}\in\lV$ and $A^{{a^-}}\in\cqt^0$.

\begin{thr}
\label{NN2}
Assume that for any $0\ne\al\in\Ga_+^0$ we have $r(\al)>0$. Then for any $a\in\cR$ we have
$$A^{{a}}=S_\nu \bb^Z_a\circ A^{{a^-}}=A^{{a^+}}\circ S_{-\nu}\bb^Z_a,$$
where $B_a^Z=\sum_{\mu_Z(\al)=a}B_\al^Zx^\al\in\cqt^0$.
\end{thr}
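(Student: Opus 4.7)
The plan is to derive Theorem \ref{NN2} from Theorem \ref{wall-crossing thr} by applying the latter one wall at a time along the horizontal line $\mu=a$ in the $(c,\mu)$-plane, and then summing the resulting identities. For $\al\in\Ga_+^0$, set $c(\al)=ar(\al)-d(\al)$, the unique real number at which $\mu_{Z_{c(\al)}}(\al,1)=a$. The remark preceding the theorem identifies $A^{Z^a}_\al=A^Z_{c(\al),\al}$; the underlying reason is that the slopes of any class with respect to $Z^a_c$ and $Z_c$ differ by the global constant $-a$, so the two semistability notions agree, and in particular $B^{Z^a}_\al=B^Z_\al$. Under the hypothesis $r(\al)>0$, shrinking $a$ by $\eps$ moves $c(\al)$ to $c(\al)-\eps r(\al)<c(\al)$; Lemma \ref{unique} together with the local finiteness of $\Ga_+$ guarantees that $c\mto A^Z_{c,\al}$ has only finitely many walls, so for $\eps$ sufficiently small (depending on $\al$) one has $A^{Z^{a-\eps}}_\al=A^Z_{c(\al)^-,\al}$. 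Thus $A^a=\sum_\al A^Z_{c(\al),\al}\,x^\al$ and $A^{a^-}=\sum_\al A^Z_{c(\al)^-,\al}\,x^\al$ as formal series in $\cqt^0$.

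Next, for each $c_0\in\cR$, Theorem \ref{wall-crossing thr} applied at $(c,\mu)=(c_0,a)$ reads $A^Z_{c_0,a}=S_\nu B^Z_a\circ A^Z_{c_0^-,a}$. The defining condition $\mu_{Z_{c_0}}(\al,1)=a$ for the terms of $A^Z_{c_0,a}=\sum_\al A^Z_{c_0,\al}x^\al$ is equivalent to $c(\al)=c_0$, so $A^Z_{c_0,a}$ collects precisely the contributions of $A^a$ from classes sitting on the wall $c_0$, and similarly for $A^Z_{c_0^-,a}$. Summing over the (countable) image $C\sb\cR$ of the map $c(\cdot)$, each $\al\in\Ga_+^0$ is hit exactly once, so $\sum_{c_0\in C}A^Z_{c_0,a}=A^a$ and $\sum_{c_0\in C}A^Z_{c_0^-,a}=A^{a^-}$ as formal rearrangements in $\cqt^0$. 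Since $S_\nu B^Z_a$ is independent of $c_0$ and $\circ$ is continuous in $\cqt^0$, the per-wall identities assemble into $A^a=S_\nu B^Z_a\circ A^{a^-}$. The companion identity $A^a=A^{a^+}\circ S_{-\nu}B^Z_a$ is obtained identically from the other half of Theorem \ref{wall-crossing thr}.

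The main subtlety is that each $\al$ sits at its own critical parameter $c(\al)$, so $A^a$ is not the value of a single $A^Z_{c,\mu}$ but must be reassembled from the whole family of wall-crossings along the line $\mu=a$. The positivity assumption $r(\al)>0$ on nonzero $\al\in\Ga_+^0$ is essential: it makes $a\mto c(\al)$ strictly monotone, so decreasing $a$ pushes every $\al$ consistently to the $c(\al)^-$-side of its own wall without crossing foreign walls; it also rules out the degenerate $\pm\infty$ behavior seen in Example \ref{exm:toda}, where $r$ vanishes on point classes and the clean identification $A^{Z^a}_\al=A^Z_{c(\al),\al}$ breaks down.
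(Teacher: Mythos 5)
Your proof is correct and follows essentially the same route as the paper's. You apply Theorem~\ref{wall-crossing thr} directly to $Z$ at slope $\mu=a$ and re-index the resulting per-wall identities by $c_0=c(\al)$, whereas the paper applies it to the shifted stability $Z^a$ at slope $\mu=0$; since $\mu_{Z^a_c}=\mu_{Z_c}-a$ and $Z^a_c$-semistability coincides with $Z_c$-semistability, these are literally the same instances of the theorem. The remaining observations — that $B_0^{Z^a}=B_a^Z$, that $c(\al)$ is strictly increasing in $a$ because $r(\al)>0$, and that for $\eps$ small enough (depending on $\al$) the perturbation $c(\al)\mapsto c(\al)-\eps r(\al)$ lands to the left of $c(\al)$ but before the next wall — are exactly the ingredients of the paper's proof, with your version making the ``one wall-crossing per class, reassembled by summing over the countable set of walls'' mechanism slightly more explicit.

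One small point worth being aware of (it affects the paper's remark as much as your write-up): the condition $\mu_{Z_{c_0}}(\al,1)=a$ solves to $c_0=a\,r(\al,1)-d(\al)$, not $a\,r(\al)-d(\al)$, because the slope of the framed class $(\al,1)$ has denominator $r(\al,1)=r(\al)+r(0,1)$. This is only a relabeling of the wall positions and does not affect the argument — $c(\al)$ is still strictly increasing in $a$ since $r(\al,1)\ge r(0,1)>0$ — but if one takes $c(\al)=a\,r(\al)-d(\al)$ literally the claimed equivalence $\mu_{Z_{c_0}}(\al,1)=a\iff c(\al)=c_0$ would only hold when $r(0,1)=0$, which is excluded by $v$-compatibility.
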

\begin{proof}
%For a given $\al\in\Ga_+^0$, let $c\in\cR$ be such that $\mu_{Z^a_c}(\al,1)=0$, \ie $$c=ar(\al)-d(\al).$$
Applying the wall-crossing formula (see Theorem \ref{wall-crossing thr}) to the stability function $Z^a$, $\mu=0$, and arbitrary $c\in\cR$,
%$c\in\cR$ chosen above, and 
we obtain
$$A_{c,\mu}^{Z^a}
=S_{\nu} \bb_\mu^{Z^a} \circ A_{c^-,\mu}^{Z^a}
=A_{c^+,\mu}^{Z^a}\circ S_{-\nu} \bb_\mu^{Z^a}.$$
We know that $Z^a$-stability is equivalent to $Z$-stability. If $\al\in\Ga_+^0$ is such that $\mu_{Z^a}(\al)=\mu=0$ then 
$$ar(\al)-d(\al)=0$$
and therefore $\mu_Z(\al)=a$. This implies that we can identify $\bb_\mu^{Z^a}$ with $\bb^Z_a$.

We can identify $A_{c,\mu}^{Z^a}$ with a summand of $A^{Z^{a}}$ corresponding to those $\al\in\Ga_+^0$ that satisfy
\begin{equation}
ar(\al)-d(\al)=c.
\label{NN1}
\end{equation}

The series $A_{c^-,\mu}^{Z^a}$ and $A_{c^+,\mu}^{Z^a}$ also have coefficients satisfying \eqref{NN1}. We just have to show that stability with respect to $Z^a_{c^+}$ (resp.\ $Z^a_{c^-}$) is equivalent to the stability imposed in the invariants $A^{Z^{a+\eps}}$ (resp.\ $A^{Z^{a-\eps}}$) for $0<\eps\ll1$.
In order to define $A^{Z^{a+\eps}}_{\al}$ for a given $\al$ satisfying \eqref{NN1}, we should choose $b\in\cR$ such that $\mu_{Z^{a+\eps}_{b}}(\al,1)=0$, or equivalently,
$$b=(a+\eps)r(\al)-d(\al)=c+\eps r(\al)$$
and then take $A^{Z^{a+\eps}}_{b,\al}$. The corresponding stability function is
$Z^{a+\eps}_b$ and $Z^{a+\eps}_b$-stability is equivalent to $Z^{a}_b$-stability. We note now that $0<\eps r(\al)\ll1$ if $0<\eps\ll1$ and therefore $Z^a_b$-stability is equivalent to $Z^a_{c^+}$-stability.
\end{proof}

\begin{rmr}
The relevance of the above wall-crossing formula is that, in contrast to Theorem \ref{wall-crossing thr}, we don't have to get track of the changing slopes if we want to apply our wall-crossing formula several times. This makes the wall-crossing process much easier.
\end{rmr}

\begin{rmr}
Invariants $A^{Z^a}_{\al}$ contain information about all the invariants $A^{Z}_{c,\al}$. Indeed, given $c\in\cR$ and $\al\in\Ga_+^0$, let $a=\mu_{Z_c}(\al)$.
Then $c$ has the property $\mu_{Z^a_c}(\al,1)=0$ and therefore $A^{Z^a}_{\al}=A^{Z^a}_{c,\al}$. On the other hand $Z_c$-stability is equivalent to $Z^a_c$-stability and therefore $A^{Z^a}_{\al}=A^{Z^a}_{c,\al}=A^{Z}_{c,\al}$.
\end{rmr}

\begin{crl}
\label{NN3}
Assume that for any $0\ne\al\in\Ga_+^0$ we have $r(\al)>0$. Then for any $a\in\cR$ we have
\begin{multline*}
A^a=S_\nu\left( \prod^{\leftarrow}_{b\le a}\bb^Z_b\right)\circ A_{-\infty}\circ S_{-\nu}\left(\prod^{\leftarrow}_{b< a}\bb^Z_b\right)\inv\\
=S_\nu\left( \prod^{\leftarrow}_{b> a}\bb^Z_b\right)\inv\circ A_{+\infty}\circ S_{-\nu}\left(\prod^{\leftarrow}_{b\ge a}\bb^Z_b\right),	
\end{multline*}
where the products are taken in the decreasing order of $b$.
\end{crl}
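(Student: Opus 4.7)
The plan is to iterate the wall-crossing formula of Theorem \ref{NN2} and pass to the limit at $\pm\infty$. First I would check that the infinite products appearing in each formula are well defined as elements of $\cqt^0$: for any fixed $\al\in\Ga_+^0$, the coefficient of $x^\al$ in $\prod^{\leftarrow}_{b\le a}\bb_b^Z$ is a finite sum indexed by decompositions $\al=\al_1+\dots+\al_k$ with $v(\al_i)=0$, $\mu_Z(\al_i)\le a$, and $\mu_Z(\al_1)>\dots>\mu_Z(\al_k)$, and only finitely many such decompositions exist because the set $\{0<\be<\al\}$ is finite by assumption. Likewise, for each $\al$ the remark preceding Theorem \ref{NN2} supplies an $a_0(\al)\in\cR$ such that $A^{a'}_\al=A_{+\infty,\al}$ for all $a'\ge a_0(\al)$, so that $A^{a'}\to A_{+\infty}$ coefficient-wise as $a'\to+\infty$, with the dual statement at $-\infty$.

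Next I would prove the second formula by iterating downward from $+\infty$. Because $A^{b'}$ is locally constant away from walls, at each wall $b$ the two identities of Theorem \ref{NN2} combine by elimination of $A^b$ to give the rule
$$A^{b^-}=(S_\nu\bb_b^Z)\inv\circ A^{b^+}\circ S_{-\nu}\bb_b^Z.$$
Starting from $A^{a'}=A_{+\infty}$ for $a'\gg 0$ and applying this rule successively at each wall $b>a$ (using that $S_{\pm\nu}$ are algebra homomorphisms to pull them outside the accumulating products) would yield
$$A^{a^+}=S_\nu\lrb{\prod^{\leftarrow}_{b>a}\bb_b^Z}\inv\circ A_{+\infty}\circ S_{-\nu}\lrb{\prod^{\leftarrow}_{b>a}\bb_b^Z}.$$
Finally, the identity $A^a=A^{a^+}\circ S_{-\nu}\bb_a^Z$ appends one more factor on the right; since $a$ is smaller than every $b$ already present and the product is in decreasing order, this factor merges cleanly to extend the right product from $b>a$ to $b\ge a$, producing the second formula.

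The first formula is obtained symmetrically by iterating upward from $A_{-\infty}$, this time using the other form $A^a=S_\nu\bb_a^Z\circ A^{a^-}$ to absorb the final factor at $b=a$ on the left of the accumulating product. The main technical difficulty I anticipate is purely bookkeeping: one must verify that $S_\nu$ and $S_{-\nu}$ end up on the correct sides and that the asymmetry between $\prod_{b\le a}$ and $\prod_{b<a}$ (respectively $\prod_{b\ge a}$ and $\prod_{b>a}$) correctly records which of the two identities in Theorem \ref{NN2} is applied at the endpoint wall $b=a$. Convergence itself is automatic from the degree-wise finiteness noted at the outset, which in each graded component collapses every infinite product to a finite one and reduces the iteration to a finite process.
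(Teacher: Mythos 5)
Your proposal is correct and follows essentially the same approach as the paper's proof: iterate the combined rule from Theorem~\ref{NN2} across the walls starting at $\pm\infty$, then absorb the remaining factor at the wall $b=a$ using one of the two identities of Theorem~\ref{NN2}. The only cosmetic difference is that the paper iterates through $(-\infty,a]$ to reach $A^{a^+}$ and then right-multiplies by $S_{-\nu}\bb_a^Z$, whereas you stop at $A^{a^\mp}$ and apply the endpoint identity on the appropriate side, which is the same bookkeeping written slightly differently; your extra remarks about degree-wise finiteness of the infinite products are a reasonable check that the paper leaves implicit.
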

\begin{proof}
Let us prove just the first equation. It follows from Theorem \ref{NN2} that
$$A^{{a^+}}=S_\nu \bb^Z_a\circ A^{{a^-}}\circ S_{-\nu}(\bb^Z_a)\inv.$$
Crossing all the walls in the interval $(-\infty,a]$ we obtain
$$A^{{a^+}}=S_\nu\left( \prod^{\leftarrow}_{b\le a}\bb^Z_b\right)\circ A_{-\infty}\circ S_{-\nu}\left(\prod^{\leftarrow}_{b\le a}\bb^Z_b\right)\inv.$$
Now we just apply $A^a=A^{a^+}S_{-\nu}(\bb^Z_a)$.
\end{proof}

\subsection{Symmetric case}
In this section we will assume that the skew-symmetric form $\ang{-,-}$, restricted to $\Ga^0$, is zero. This is equivalent to the assumption that $\cqt^0$ is commutative.
Note that in this case we can reconstruct the skew-symmetric form $\ang{-,-}$ from $\nu|_{\Ga^0}$ by the formula
$$\ang{(\al,k),(\be,l)}=l\nu(\al)-k\nu(\be).$$
Let us slightly rewrite Theorem \ref{wall-crossing thr}.
\begin{thr}
\label{wall-cross symmetric}
For any $c\in\cR$, $\mu\in\cR$ the following equation holds in the commutative algebra $\cqt^0$
$$A_{c^+,\mu}=C_\mu A_{c^-,\mu},$$
where the transfer series $C_\mu\in\cqt^0$ is defined by
$$C_\mu=S_{\nu}\bb_\mu\cdot S_{-\nu}\bb_\mu\inv=S_{-\nu}(S_{2\nu}\bb_\mu/\bb_\mu).$$
\end{thr}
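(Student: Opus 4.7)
The plan is to deduce Theorem \ref{wall-cross symmetric} directly from Theorem \ref{wall-crossing thr} by exploiting the commutativity of $\cqt^0$ in the symmetric case. First I would observe that $\bb_\mu$, $A_{c,\mu}$, and $A_{c^\pm,\mu}$ all lie in $\cqt^0$ by Definition \ref{invar}, and consequently so do $S_{\pm\nu}\bb_\mu$, since each $S_\la$ is a grading-preserving algebra homomorphism. The symmetry hypothesis, that $\ang{-,-}$ restricted to $\Ga^0$ vanishes, means that on basis elements $x^\al\circ x^\be=x^{\al+\be}=x^\be\circ x^\al$ for $\al,\be\in\Ga^0$, so $\cqt^0$ is commutative.

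Next I would combine the two identities of Theorem \ref{wall-crossing thr},
\[
A_{c,\mu}=S_\nu\bb_\mu\circ A_{c^-,\mu}=A_{c^+,\mu}\circ S_{-\nu}\bb_\mu,
\]
to obtain $A_{c^+,\mu}\circ S_{-\nu}\bb_\mu=S_\nu\bb_\mu\circ A_{c^-,\mu}$. Since all four factors lie in the commutative subalgebra $\cqt^0$, and since $\bb_\mu$ has constant term $1$ (the summand at $\al=0$), both $S_\nu\bb_\mu$ and $S_{-\nu}\bb_\mu$ are units in the completion. Solving for $A_{c^+,\mu}$ in the commutative algebra gives
\[
A_{c^+,\mu}=S_\nu\bb_\mu\cdot (S_{-\nu}\bb_\mu)\inv\cdot A_{c^-,\mu}=C_\mu\cdot A_{c^-,\mu}.
\]

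For the second form $C_\mu=S_{-\nu}(S_{2\nu}\bb_\mu/\bb_\mu)$, I would invoke the identity $S_\la\circ S_{\la'}=S_{\la+\la'}$, which is clear on generators from $S_\la(x^\al)=(-q^\oh)^{\la(\al)}x^\al$, together with the fact that each $S_\la$ is an algebra homomorphism and hence commutes with inversion. Then $S_{-\nu}(S_{2\nu}\bb_\mu)=S_\nu\bb_\mu$ and $S_{-\nu}(\bb_\mu\inv)=(S_{-\nu}\bb_\mu)\inv$, so the product $S_{-\nu}(S_{2\nu}\bb_\mu\cdot\bb_\mu\inv)$ equals $S_\nu\bb_\mu\cdot(S_{-\nu}\bb_\mu)\inv$, which is the first expression for $C_\mu$.

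I do not expect any real obstacle: the statement is essentially a cosmetic repackaging of Theorem \ref{wall-crossing thr} once commutativity of $\cqt^0$ is in hand. The only points meriting care are verifying that $\bb_\mu$ and $A_{c^\pm,\mu}$ genuinely live in the commutative subalgebra $\cqt^0$ rather than only in the larger $\cqt$, and checking invertibility of the transfer factor, both of which are handled above.
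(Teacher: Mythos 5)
Your proposal is correct and follows the same route the paper takes: the paper simply presents this theorem as a rewriting of Theorem \ref{wall-crossing thr}, and your argument—combine the two identities $A_{c,\mu}=S_\nu\bb_\mu\circ A_{c^-,\mu}=A_{c^+,\mu}\circ S_{-\nu}\bb_\mu$, use commutativity of $\cqt^0$ (which holds precisely because $\ang{-,-}|_{\Ga^0}=0$) to solve for $A_{c^+,\mu}$, and use $S_\la\circ S_{\la'}=S_{\la+\la'}$ together with multiplicativity of $S_\la$ for the alternative form of $C_\mu$—is exactly the content the paper leaves implicit. Your extra care in noting that all factors indeed lie in $\cqt^0$ (by Definition \ref{invar} and grading-preservation of $S_\la$) and that $\bb_\mu$ is a unit in the completion (constant term $1$, coming from the zero object) is appropriate and fills in what the paper tacitly assumes.
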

\rem{old notation: $B'_\mu=C_\mu\inv$}

\begin{rmr}
According to this result we just need to know the transfer series $C_\mu$ for all $\mu\in\cR$ and either the series $A_{-\infty}$ or $A_{+\infty}$ in order to determine the series $A_{c,\mu}$ for any $c\in\cR$ and $\mu\in\cR$. A more precise statement will be given in Theorem~\ref{thr:asympt}.
\end{rmr}

%\subsection{Donaldson-Thomas invariants}
All the results of the previous sections could be proved also for classical Hall algebras and for the quantum torus over $\cQ(q^\oh)$.
In the following definition we need the motivic quantum torus, as we will use its \la-ring structure.

\begin{dfn}
\label{DT inv}
For any $\mu\in\cR$ we define the Donaldson-Thomas invariants 
$\Om_\mu=\sum_{\mu(\al)=\mu}\Om_\al x^\al\in \cqt^0$ by the formula
$$\bb_\mu=\Exp\left(\frac{\Om_\mu}{\cL-1}\right),$$
where $\Exp$ is the plethystic exponent on the complete \la-ring $\cqt^0$ \cite{getzler_mixed,mozgovoy_computational}.
\end{dfn}
Note that
$$C_\mu=S_{\nu}\bb_\mu\cdot S_{-\nu}\bb_\mu\inv
=S_{-\nu}\Exp\left(\frac{S_{2\nu}\Om_\mu-\Om_\mu}{\cL-1}\right).$$
\rem{$B'_{>\mu}=\prod_{\eta>\mu}B'_\eta
=S_{-\nu}\Exp\left(\frac{\Om_{>\mu}-S_{2\nu}\Om_{>\mu}}{\cL-1}\right).$}

\begin{rmr}
It is important to note that the algebra homomorphism $S_\nu:\cqt\to\cqt$ is not a \la-ring homomorphism in general. Indeed, we have
$$S_\nu(\psi_n(x^\al))=S_\nu(x^{n\al})=(-1)^{n\nu(\al)}\cL^{n\nu(\al)/2}x^{n\al}$$
while
$$\psi_n(S_\nu x^\al)
=\psi_n((-1)^{\nu(\al)}\cL^{\nu(\al)/2}x^\al)=(-1)^{\nu(\al)}\cL^{n\nu(\al)/2}x^{n\al}.
$$
On the other hand $S_{2\nu}:\cqt\to\cqt$ is a \la-ring homomorphism. Therefore, we can interchange the operators $S_{2\nu}$ and $\Exp$.
\end{rmr}

\begin{rmr}
Assume that for any \al there exists the Euler number specialization of $\Om_\al\in\lV$ (we put $\cL^\oh=1$), which we denote by $\ub\Om_\al$ (these numbers are called classical Donaldson-Thomas invariants). Then the above formula implies that there exists the Euler number specialization of $C_\mu$
$$\ub C_\mu=\ub S_{\nu}\Exp\left(\sum_{\mu(\al)=\mu}\nu(\al)\ub\Om_\al x^\al\right),$$
where the operator $\ub S_\nu$ is defined by $\ub S_{\nu}(x^\al)=(-1)^{\nu(\al)}x^\al$.
\end{rmr}

%\subsection{Asymptotic invariants}

\begin{dfn}
Let $c\in\cR$, $\mu\in\cR$. For any element $f=\sum_\al f_\al x^\al\in\cqt^0$, we define its truncation $\tau^c_\mu f\in\cqt^0$ by the formula
$$\tau^c_\mu f=\sum_{\mu_c(\al,1)=\mu}f_\al x^\al.$$
%We define $\tau_\mu f=\tau^0_\mu f$.
\end{dfn}

%\begin{dfn}
%For any $\mu\in\cR$ define the series $C_{>\mu}=\prod_{\eta>\mu}C_\eta\in\cqt^0$ and similarly the series $C_{\ge\mu},C_{<\mu},C_{\le\mu}$.
%\end{dfn}

\begin{thr}
\label{thr:asympt}
For any $c\in\cR,\mu\in\cR$ we have
\begin{enumerate}
	\item $A_{c^+,\mu}=\tau^c_\mu((C_{>\mu})\inv A_{+\infty})$,
	\item $A_{c^-,\mu}=\tau^c_\mu(C_{<\mu} A_{-\infty})$,
\end{enumerate}
where
$$C_{>\mu}=\prod_{\eta>\mu}C_\eta,\qquad C_{<\mu}=\prod_{\eta<\mu}C_\eta.$$
\end{thr}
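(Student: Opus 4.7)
The plan is to derive the two formulas by iterating the single-wall identity of Theorem~\ref{wall-cross symmetric} out to the asymptotic regimes $\pm\infty$, and then applying the truncation $\tau^c_\mu$ to extract the correct slope component. Everything takes place inside the commutative algebra $\cqt^0$.

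First, I would pass to the uniform notation $A^a$. In the symmetric setting, Theorem~\ref{wall-cross symmetric} translates into the wall-crossing identity $A^{a^+}=C_a\cdot A^{a^-}$. Between walls the series $A^a$ is constant in $a$, so $A^{b^-}=A^{a^+}$ whenever $(a,b)$ contains no walls, while $A^{b^+}=C_b\cdot A^{b^-}$ at each wall. Telescoping upward and using the convergence $A^a\to A_{+\infty}$ as $a\to+\infty$ (under the hypothesis $r(\al)>0$ on $\Ga_+^0\setminus\set0$ from Corollary~\ref{NN3}), I obtain $A^{a^+}=C_{>a}\inv A_{+\infty}$. Symmetrically, telescoping downward yields $A^{a^-}=C_{<a}\cdot A_{-\infty}$.

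Next, I would identify $\tau^c_\mu A^{\mu^\pm}$ with $A_{c^\pm,\mu}$. Unpacking the definition of $A^a$, one has $A^{a^+}_\al=A^Z_{c_\al^+,\al}$ with $c_\al=ar(\al,1)-d(\al,1)$ the unique value for which $\mu_{Z_{c_\al}}(\al,1)=a$. Setting $a=\mu$, the condition $c_\al=c$ becomes precisely $\mu_c(\al,1)=\mu$, which is the selection rule of $\tau^c_\mu$. Hence
\[
\tau^c_\mu A^{\mu^+} = \sum_{\mu_c(\al,1)=\mu} A^Z_{c^+,\al}\,x^\al = A_{c^+,\mu},
\]
and analogously $\tau^c_\mu A^{\mu^-}=A_{c^-,\mu}$. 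Applying $\tau^c_\mu$ to the two asymptotic identities with $a=\mu$ gives exactly (1) and (2).

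The main obstacle is justifying the infinite products $C_{>\mu},C_{<\mu}$ and the iterated telescoping inside $\cqt^0$. This reduces to the local finiteness hypothesis on $\Ga_+$: for each $\al\in\Ga_+^0$ there are only finitely many $0\le\be\le\al$, so only finitely many factors $C_b$ contribute to the coefficient of $x^\al$, and the coefficientwise limit $A^a\to A_{\pm\infty}$ stabilizes after finitely many wall crossings. Beyond this formal bookkeeping, the argument is routine.
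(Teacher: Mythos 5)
Your argument is correct, and it does take a genuinely different (and arguably more economical) route than the paper's own proof. The paper proves the identity by expanding the right-hand side $\tau^c_\mu((C_{>\mu})\inv A_{+\infty})$ coefficientwise into an explicit sum over ordered decompositions $\al=\al'+\al_1+\dots+\al_n$ with $\mu_c(\al,1)<\mu(\al_1)<\dots<\mu(\al_n)$, sets up a bijection between such decompositions and finite strictly increasing sequences of walls $c<c_1<\dots<c_n$ for the class $\al$, and then verifies the sum by iterating the single-wall identity $A_{a^-,\mu}=D_\mu\cdot A_{a^+,\mu}$ at those walls. You instead import Corollary~\ref{NN3}, specialize it to the commutative (symmetric) case so that the two conjugating products collapse to $C_{>a}\inv$ and $C_{<a}$, and then apply the truncation $\tau^c_\mu$; the work shifts to the identification $\tau^c_\mu A^{\mu^\pm}=A_{c^\pm,\mu}$, which you handle correctly by tracking $c_\al=\mu\, r(\al,1)-d(\al,1)$. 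Your route buys brevity and conceptual transparency by reusing the uniform notation machinery; the paper's route re-derives the telescoping from scratch but in doing so exhibits the explicit combinatorial expansion of the coefficients, which is of independent value and does not formally invoke the hypothesis $r(\al)>0$ for $0\ne\al\in\Ga_+^0$ (whereas your appeal to Corollary~\ref{NN3} does — a harmless difference here since the paper clearly works under this hypothesis throughout Section~4.3--4.4, but worth stating explicitly). One small point worth tightening in your write-up: when you say ``between walls $A^a$ is constant,'' this is only true coefficientwise — each $A^a_\al$ is locally constant away from the finitely many walls for $\al$, but the full series $A^a$ may change at a dense set of $a$. The coefficientwise statement is exactly what is needed for the telescoping, and your appeal to local finiteness of $\Ga_+$ covers it.
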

\begin{proof}
We will prove only the first statement.
To simplify our notation let $D_\mu=C_\mu\inv$. We will write $\mu_c(\al)$ for $\mu_c(\al,1)$ if $\al\in\Ga^0$. The non-twisted slope function (corresponding to the original stability function $Z$) still will be denoted by $\mu(\al)$ for $\al\in\Ga^0$. Our statement says that for any $0<\al\in\Ga^0$ we have
\begin{equation}
A_{c^+,\al}=\sum_{\over{\mu_c(\al)<\mu(\al_1)<\dots<\mu(\al_n)}{\al_i>0,\al'=\al-\sum\al_i\ge0}}D_{\al_1}\dots D_{\al_n}A_{+\infty,\al'}.
\label{eq:1}
\end{equation}

For any sequence $(\al_1,\dots,\al_n)$ as in the above sum define
\begin{equation}
\al_k'=\al-\sum_{i=1}^k\al_i,\qquad 1\le k\le n
\label{eq:2}
\end{equation}
and let $c_k\in\cR$ be the numbers uniquely determined by the condition \begin{equation}
\mu(\al_k)=\mu_{c_k}(\al_k').
\label{eq:3}
\end{equation}
Note that $\al_{k-1}'=\al_{k}+\al_{k}'$ and therefore
$$\mu_{c_k}(\al_k')=\mu(\al_k)=\mu_{c_k}(\al_{k-1}').$$
This implies
$$\mu_{c_{k-1}}(\al_{k-1}')=\mu(\al_{k-1})<\mu(\al_{k})=\mu_{c_{k}}(\al_{k-1}')$$
and therefore $c_{k-1}<c_{k}$. It follows from
$$\mu_c(\al)<\mu(\al_1)=\mu_{c_1}(\al_1')=\mu_{c_1}(\al)$$
that $c<c_1$. Therefore
$$c<c_1<\dots<c_n.$$
Conversely, let there be given such a sequence of numbers and a sequence of elements $\al_1,\dots,\al_n>0$ in $\Ga^0$ such that $\al-\sum\al_i\ge0$ and such that equation \eqref{eq:3} is satisfied, where $\al_k'$ are defined by equation \eqref{eq:2}. Then one can show that
$$\mu_c(\al)<\mu(\al_1)<\dots<\mu(\al_n).$$

The wall crossing formula says that for $a\in\cR$, $\mu\in\cR$
$$A_{a^-,\mu}=D_\mu\cdot A_{a^+,\mu}.$$
Equivalently,
$$A_{a^-,\al}=\sum_{\over{0\le\be\le\al}{\mu(\be)=\mu_a(\al)}}D_\be A_{a^+,\al-\be},$$
where we always allow the summand corresponding to $\be=0$.
Equation \eqref{eq:1} is obtained by applying this wall-crossing formula at points
$$c_1<\dots <c_n$$
for all possible choices $c<c_1<\dots<c_n$.
\end{proof}

The following result is a generalization of \cite[Section 2.4]{chuang_motivic}

\begin{crl}
For any $c\in\cR$ and $\mu\in\cR$ we have
$$\tau^c_\mu(C_{\le\mu}A_{-\infty})
=\tau^c_\mu((C_{>\mu})\inv A_{+\infty}) 
.$$
\end{crl}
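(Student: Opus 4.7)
The plan is to express both sides of the claimed identity in terms of $A_{c^+,\mu}$ and thereby reduce the corollary to a single commutation statement between $\tau^c_\mu$ and left-multiplication by $C_\mu$.

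First, by Theorem~\ref{thr:asympt}(1) the right-hand side $\tau^c_\mu((C_{>\mu})\inv A_{+\infty})$ equals $A_{c^+,\mu}$. Second, combining Theorem~\ref{thr:asympt}(2) with the wall-crossing formula $A_{c^+,\mu}=C_\mu A_{c^-,\mu}$ of Theorem~\ref{wall-cross symmetric} gives $A_{c^+,\mu}=C_\mu\cdot \tau^c_\mu(C_{<\mu}A_{-\infty})$. Since $\cqt^0$ is commutative one has $C_{\le\mu}=C_\mu\cdot C_{<\mu}$, so the identity to prove reduces to the claim
$$\tau^c_\mu(C_\mu\cdot g)=C_\mu\cdot\tau^c_\mu(g)$$
for $g=C_{<\mu}A_{-\infty}$ (in fact for arbitrary $g\in\cqt^0$).

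The commutation rests on two facts. First, $C_\mu$ is supported on monomials $x^\be$ with $\be\in\Ga_+^0$ and $\mu(\be)=\mu$: this follows from $C_\mu=S_\nu\bb_\mu\cdot S_{-\nu}\bb_\mu\inv$, since $\bb_\mu$ is supported on slope-$\mu$ classes by definition and $S_{\pm\nu}$ preserves the $\Ga$-grading (hence so does the multiplicative inverse in the completion). Second, for any such $\be$ and any $\al\in\Ga_+^0$, the condition $\mu_c(\al+\be,1)=\mu$ is equivalent to $\mu_c(\al,1)=\mu$: expanding $\mu_c(\gamma,1)=(d(\gamma)+d(0,1)+c)/(r(\gamma)+r(0,1))$ and substituting $d(\be)=\mu r(\be)$, the $\be$-contributions to numerator and denominator cancel, reducing the equation to $\mu_c(\al,1)=\mu$, and the reverse implication runs through the same computation backwards.

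Granted this equivalence, comparing the coefficients of $x^\gamma$ in $\tau^c_\mu(C_\mu g)$ and in $C_\mu\tau^c_\mu(g)$ shows that both pick out exactly the same terms in every decomposition $\gamma=\al+\be$ with $\be$ in the support of $C_\mu$. The main obstacle is the small slope calculation above, including the potentially degenerate case $r(\be)=0$ (which can only occur for $\mu=+\infty$ and is handled by the same algebraic manipulation using $d(\be)>0$). Once the commutation is verified, the corollary follows by stringing together the three equalities described above.
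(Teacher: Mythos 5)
Your proof is correct and takes essentially the same route as the paper: both combine Theorem~\ref{wall-cross symmetric} with the two parts of Theorem~\ref{thr:asympt}, comparing them at $A_{c^+,\mu}$ via the chain $A_{c^+,\mu}=C_\mu A_{c^-,\mu}=C_\mu\tau^c_\mu(C_{<\mu}A_{-\infty})=\tau^c_\mu(C_{\le\mu}A_{-\infty})$. The paper treats the commutation $C_\mu\tau^c_\mu(g)=\tau^c_\mu(C_\mu g)$ as evident; you spell it out correctly (note only that for $\mu\in\cR$ the case $r(\be)=0$ cannot occur for $\be$ in the support of $C_\mu$, and that $C_\mu$ also has a constant term $\be=0$, which commutes trivially).
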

\begin{proof}
Just note that
$$A_{c^+,\mu}=C_\mu A_{c^-,\mu}
=C_\mu\tau^c_\mu(C_{<\mu} A_{-\infty})
=\tau^c_\mu(C_{\le\mu} A_{-\infty}).
$$
\end{proof}
\rem{This implies the recursion formula
$(A_{+\infty} B'_{<\mu})_\mu
=(A_{+\infty} B'_{\le-\mu})_{-\mu}(s\inv).$}

\section{Applications}
\subsection{Smooth models}
Let $Q=(Q_0,Q_1)$ be a quiver. Let $w\in\cN^{Q_0}$ and let $Q'$ be a new quiver obtained from $Q$ by adding one vertex $*$ and $w_i$ arrows from $*$ to $i$ for every $i\in Q_0$.  Define $\Ga^0=\cZ^{Q_0}$ and $\Ga=\cZ^{Q'_0}=\Ga^0\oplus\cZ$.
Define the framing rank $v:\Ga\to\cZ$ by 
$$v(\al,\al_*)=\al_*,\qquad (\al,\al_*)\in\Ga^0\oplus\cZ.$$
Let $\hi$ be the Euler-Ringel form of the quiver $Q'$ defined by
$$\hi(\al,\be)=\sum_{i\in Q'_0}\al_i\be_i-\sum_{a:i\to j}\al_i\be_j,\qquad \al,\be\in\Ga.$$
We define the tits form of $Q'$ by $T(\al)=\hi(\al,\al)$ and the skew symmetric form $\ang{-,-}$ of $Q'$ by
$$\ang{\al,\be}=\hi(\al,\be)-\hi(\be,\al),\qquad \al,\be\in\Ga.$$
Note that the homomorphism $\nu$ from Definition \ref{def:nu} is given by
\begin{equation}
\nu(\al)=\ang{(\al,0),(0,1)}=\sum_{i\in Q_0}w_i\al_i=w\cdot\al,\qquad \al\in\Ga^0.
\label{eq:nu}
\end{equation}
Let $\lA$ be the category of $Q'$-representations (over \cC) and let the homomorphism $\cl:K_0(\lA)\to\Ga$ be given by $\cl(M)=\lb\dim M$, where for any representation $M$ of $Q'$ we define it dimension vector $\lb\dim M$ by 
$$\lb\dim M=(\dim M_i)_{i\in Q'_0}\in\Ga.$$
Let 
$$H=H(\lA)=\oplus_{\al\in\Ga_+}H(\lA)_\al$$
be the motivic Hall algebra of \lA and let \hall be its completion.
% (see \cite{joyce_configurationsa} or \cite[Section 4.2]{bridgeland_introduction}) and let $\hall$ be its completion. As a group $H(\lA)_\al$ is given by $K_0(\St/\lM_\al)$, where $\lM_\al$ is the stack of objects in \lA having class $\al$.
Let $\cqt$ be the complete motivic quantum torus of \Ga. The integration map $I:\hall\to\cqt$ was described in Proposition~\ref{reineke map}. 

Given $\te\in\cR^{Q_0}$, we define the corresponding stability function $Z=-d+ir:\Ga\to\cC$ by
$$d(\al,\al_*)=\te\cdot\al,\qquad r(\al,\al_*)=\sum_{i\in Q_0}\al_i+\al_*,\qquad (\al,\al_*)\in\Ga=\Ga^0\oplus\cZ.$$
For any $c\in\cR$ let $\mu_c$ be the slope function corresponding to the stability function $Z_c=Z-cv$, \ie
$$\mu_c(\al,\al_*)=\frac{\te\cdot\al+c\al_*}{\sum_{i\in Q_0}\al_i+\al_*}.$$

\begin{rmr}
We know from Proposition \ref{infty stability} that the notions of $\pm\infty$-stability are independent of $\te$. If a representation $M'=(M,\cC)$ of $Q'$ is $-\infty$-stable then for any its proper subrepresentation $N$ we should have $v(N)=1$. But $M'$ has an obvious subrepresentation $(M,0)$, which implies that $M=0$. This means that there is the unique $-\infty$-stable representation $(0,\cC)$.
\end{rmr}

Let $\mu\in\cR$ and let $\al\in\cN^{Q_0}$ be such that $\mu(\al)=\mu$. Let $\lM^\te_{\al}=\lM^\te(Q,\al)$ be the moduli stack of $Z$-semistable representations of $Q$ of dimension \al. 
The series $\bb_{\mu}=\bb^\te_\mu\in\cqt^0$ defined in \ref{invar} is given by
$$\bb_{\mu}=\sum_{\mu(\al)=\mu}(-\cL^\oh)^{T(\al)}[\lM^{\te}_\al] x^\al.$$
There is just one representation of dimension $(0,1)$. Therefore the invariant $A_*\in\lV$ (see Definition \ref{invar}) is given by
$$A_*=\frac{-\cL^\oh}{\cL-1}.$$
Let $\al'=(\al,1)\in\Ga$, $c=\mu$, and let $\lM^\te_{\al,w}=\lM^{(\te,c^+)}(Q',\al')$ be the moduli stack of $Z_{c^+}$-stable representations of the quiver $Q'$ of dimension $\al'$. It is called the smooth model of the moduli stack $\lM^\te_\al$ in \cite{engel_smooth}.
The series $A_{c^+,\mu}=A_{c^+,\mu}^\te\in\cqt$ defined in \ref{invar} is given by (note that $T(\al')=T(\al)+1-w\cdot\al$)
\begin{multline}
\label{eq:4}
A_{c^+,\mu}
=A_*\inv\sum_{\mu(\al)=\mu}(-\cL^\oh)^{T(\al)+1-w\cdot\al}[\lM^\te_{\al,w}]x^\al\\
=(\cL-1)S_{-\nu}\sum_{\mu(\al)=\mu} (-\cL^\oh)^{T(\al)}[\lM^\te_{\al,w}]x^\al.
\end{multline}
Consider the moduli space $\lM^{(\te,c^-)}(Q',\al')$ of $Z_{c^-}$-stable representations of the quiver $Q'$ of dimension $\al'$ (as before we consider $c=\mu$ and $\al'=(\al,1)$, where $\al\in\Ga^0$ is such that $\mu(\al)=\mu$). One can easily see that the notion of $c^-$-stability of representations of dimension \al is equivalent to the notion of $-\infty$-stability. Therefore the above moduli stack is empty unless $\al=0$ in which case it consists of one point. This implies
$I(\tl A_{c^-,\mu})=A_* x_*$ and therefore
\begin{equation}
A_{c^-,\mu}=1.
\label{eq:5}
\end{equation}
It follows from Theorem \ref{wall-crossing thr} that
$$A_{c^+,\mu}=S_{\nu} \bb_\mu \circ A_{c^-,\mu} \circ S_{-\nu} \bb_\mu\inv.$$
Combining this formula with eqautions \eqref{eq:4}, \eqref{eq:5} we obtain the following result equivalent to \cite[Theorem 5.2]{engel_smooth}

\begin{thr}
For any $\te\in\cR^{Q_0}$, $\mu\in\cR$, and $w\in\cN^{Q_0}$ we have
$$\sum_{\mu(\al)=\mu} (-\cL^\oh)^{T(\al)}[\lM^\te_{\al,w}]x^\al
=\frac{1}{\cL-1} S_{2\nu} \bb_\mu^\te \circ (\bb_\mu^\te)\inv.$$
\end{thr}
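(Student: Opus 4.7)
My plan is to feed the data computed in equations \eqref{eq:4} and \eqref{eq:5} into the wall-crossing identity of Theorem \ref{wall-crossing thr} taken at $c=\mu$, and then apply the algebra homomorphism $S_\nu$ once in order to cancel the $S_{-\nu}$ appearing in \eqref{eq:4}.

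First I would instantiate Theorem \ref{wall-crossing thr} at the slope $\mu$ and parameter $c=\mu$, getting
\[ A_{c^+,\mu}\circ S_{-\nu}\bb_\mu^\te = S_\nu \bb_\mu^\te \circ A_{c^-,\mu}. \]
Equation \eqref{eq:5} (which followed from the observation that $c^-$-stability of a framed class of slope $\mu$ coincides with $-\infty$-stability, and that the only $-\infty$-stable object is the simple framing representation $(0,\cC)$) gives $A_{c^-,\mu}=1$, so the identity above rearranges to
\[ A_{c^+,\mu} = S_\nu \bb_\mu^\te \circ (S_{-\nu}\bb_\mu^\te)^{-1} = S_\nu \bb_\mu^\te \circ S_{-\nu}\bigl((\bb_\mu^\te)^{-1}\bigr), \]
where the second equality uses that $S_{-\nu}$ is an algebra homomorphism and therefore commutes with inversion in the (a priori noncommutative) completion $\cqt^0$.

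Next I would apply $S_\nu$ to both sides. Since $S_\nu$ is an algebra homomorphism by Definition \ref{def:nu}, it respects the product $\circ$, and because $S_\nu\circ S_{-\nu}=\Id$ the right-hand side collapses to $S_{2\nu}\bb_\mu^\te\circ(\bb_\mu^\te)^{-1}$. For the left-hand side, I would substitute the formula \eqref{eq:4},
\[ A_{c^+,\mu}=(\cL-1)\, S_{-\nu}\sum_{\mu(\al)=\mu}(-\cL^\oh)^{T(\al)}[\lM^\te_{\al,w}]x^\al, \]
so that applying $S_\nu$ and using $S_\nu\circ S_{-\nu}=\Id$ yields
\[ S_\nu A_{c^+,\mu} = (\cL-1)\sum_{\mu(\al)=\mu}(-\cL^\oh)^{T(\al)}[\lM^\te_{\al,w}]x^\al. \]
Equating the two expressions for $S_\nu A_{c^+,\mu}$ and dividing by $\cL-1$ produces the claimed identity.

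There is no substantive obstacle here, since both key ingredients are already in place immediately before the statement. The one point requiring attention is bookkeeping with the twists: because $\cqt^0$ need not be commutative (the skew-symmetric form of $Q$ restricted to $\Ga^0$ can be nonzero), the operators $S_{\pm\nu}$ must be manipulated as genuine algebra homomorphisms, so that they commute with $\circ$ and with taking $\circ$-inverses. Keeping this consistently in mind is what makes the manipulation above go through cleanly.
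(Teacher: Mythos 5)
Your argument is correct and is essentially the paper's own proof: the paper likewise combines Theorem~\ref{wall-crossing thr} at $c=\mu$ with equations~\eqref{eq:4} and~\eqref{eq:5}, obtaining $A_{c^+,\mu}=S_\nu\bb_\mu^\te\circ A_{c^-,\mu}\circ(S_{-\nu}\bb_\mu^\te)^{-1}$ and then simplifying using $A_{c^-,\mu}=1$. Your extra care with the noncommutativity of $\cqt^0$ and the fact that $S_{\pm\nu}$ are algebra homomorphisms (hence commute with $\circ$-inversion) is exactly the right thing to keep in mind, and the final application of $S_\nu$ to cancel the $S_{-\nu}$ in \eqref{eq:4} matches the paper's intended computation.
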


\subsection{Quivers with potentials}
Let $(Q,W)$ be a quiver with polynomial potential and let $\wt:Q_1\to\cN$ be a map such that $W$ is homogeneous of degree $1$ with respect to $\wt$. Let $J_{Q,W}=\cC Q/(\dd W)$ be the corresponding Jacobian algebra.

For a given $w\in\cN^{Q_0}$, we define a new quiver $Q'$ in the same way as before.
Potential $W$ can be considered also as a potential on $Q'$. Let $J_{Q',W}=\cC Q'/(\dd W)$ be the corresponding Jacobian algebra.
We define the group \Ga, bilinear forms \hi and $\ang{-,-}$ on \Ga, stability function $Z:\Ga\to\cZ$ (for a fixed $\te\in\cR^{Q_0}$), the framing rank function $v:\Ga\to\cZ$, the Hall algebra $\hall$ of $Q'$, and the quantum torus $\cqt$ of $Q'$ in the same way as before.

Using the weight function $\wt:Q_1\to\cN$ one can construct \cite{mozgovoy_motivica} an integration map $\Ieq:\heq\to \cqt$ (which is an algebra homomorphism) from the equivariant Hall algebra $\heq\sb\hall$ to the quantum torus. The Donaldson-Thomas series $\bb_\mu^\te,A_{c,\mu}^\te\in\cqt^0$ of the moduli spaces of representations of the Jacobian algebra are defined by
$$\bb_\mu^\te=\Ieq(\tl \bb_\mu),\qquad A_{c,\mu}^\te =\Ieq(\tl A_{c,\mu})/\Ieq(\tl A_*),$$
where $\tl \bb_\mu$, $\tl A_{c,\mu}$ are the elements in the Hall algebra of the quiver $Q'$ defined in Section \ref{hall alg relations}. In the same way we define the elements $A_{\pm\infty}\in\cqt^0$ (recall that by Proposition \ref{infty stability} they are independent of \te). 

\begin{dfn}
Define the universal Donaldson-Thomas series $\bb_{U}\in\cqt^0$ to be $\bb^\te_\mu$ for $\te=0$, $\mu=0$.
\end{dfn}

\begin{rmr}
Representations counted by $\bb_{U}$ are all possible representations of $J_{Q,W}$ because the imposed stability condition is trivial and all objects are semistable.
\end{rmr}

\begin{dfn}
Define the non-commutative Donaldson-Thomas invariants
$$A^w_{NCDT}=S_\nu A_{+\infty}\in\cqt^0.$$
\end{dfn}

\begin{rmr}
Representations counted by the invariant $A^w_{NCDT}$ are $+\infty$-stable representations of $J_{Q',W}$. By Proposition \ref{infty stability} these are representations $M'=(M,\cC)$ such that $\cC$ generates the whole representation $M'$, \ie if a subrepresentation $N'\sb M'$ satisfies $v(N')=1$ then $N'=M'$. Such representations were called cyclic in \cite{mozgovoy_noncommutative,szendroi_non-commutative}.
\end{rmr}

We can express invariants $A^w_{NCDT}$ in terms of $\bb_{U}$ (\cf \cite[Theorem 7.1]{morrison_motivic}).

\begin{prp}
We have
$$A^w_{NCDT}=S_{2\nu}\bb_{U}\circ \bb\inv_{U}.$$
\end{prp}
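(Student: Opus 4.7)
The plan is to apply Theorem \ref{wall-crossing thr} with the trivial stability parameter $\te=0$ at $c=0$, $\mu=0$, and to identify the two endpoint series explicitly. When $\te=0$ we have $Z=ir$, so every nonzero unframed class has slope $0$ and every finite-dimensional representation of $J_{Q,W}$ is automatically $Z$-semistable. In particular the unframed input to the wall-crossing formula is $\bb^{\te=0}_0=\bb_U$.

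The main computation is to determine $A^{\te=0}_{0^\pm,0}$. For a framed representation $(M,\cC)$ of class $(\al,1)$ the slope is $\mu_c(\al,1)=c/(\sum\al_i+1)$, and the relevant subobjects are either $(N,0)$ with $N\sb M$, or $(N,\cC)$ with $N\sb M$ containing the image of the framing maps. For $c>0$, subobjects $(N,0)$ satisfy $0\le c/(\sum\al_i+1)$ automatically, while $(N,\cC)$ with $N\subsetneq M$ violates $c/(\sum N_i+1)\le c/(\sum\al_i+1)$. Hence $c$-semistability for any small $c>0$ reduces to the cyclicity condition characterizing $+\infty$-stability in Proposition \ref{infty stability}, and so $A^{\te=0}_{0^+,0}=A_{+\infty}$. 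For $c<0$, the inequality $0\le c/(\sum\al_i+1)$ fails as soon as $M$ admits any nonzero subrepresentation, forcing $M=0$; only the trivial framed object $(0,\cC)$ survives, and after normalizing by $A_*$ this yields $A^{\te=0}_{0^-,0}=1$.

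Substituting into the two halves of Theorem \ref{wall-crossing thr} gives
\[
A^{\te=0}_{0,0}=S_\nu\bb_U\circ 1=A_{+\infty}\circ S_{-\nu}\bb_U,
\]
from which $A_{+\infty}=S_\nu\bb_U\circ(S_{-\nu}\bb_U)\inv$. Since $S_\nu$ is a $\Ga$-graded algebra homomorphism of $\cqt$ (Definition \ref{def:nu}) it commutes with $\circ$ and with inversion, and the trivial identities $S_\nu\circ S_{-\nu}=\id$ and $S_\nu\circ S_\nu=S_{2\nu}$ give
\[
A^w_{NCDT}=S_\nu A_{+\infty}=S_{2\nu}\bb_U\circ\bb_U\inv,
\]
as required.

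The only preliminary to check is that $\bb_U=1+(\text{higher degree})$ is invertible in the completed quantum torus $\cqt^0$, which is immediate. I do not anticipate a harder obstacle: all the substantive work is absorbed by the wall-crossing formula already established, and the remaining argument is essentially a bookkeeping identification of the two limits $c\to 0^\pm$ with $A_{+\infty}$ and $1$.
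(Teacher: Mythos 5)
Your proof is correct and follows the same strategy as the paper's: specialize the wall-crossing formula (Theorem \ref{wall-crossing thr}) to $\te=0$, $c=0$, $\mu=0$, identify $A_{c^-,\mu}=A_{-\infty}=1$ (since the only $-\infty$-stable framed representation is $(0,\cC)$) and $A_{c^+,\mu}=A_{+\infty}$, and then apply $S_\nu$ using that it is an algebra homomorphism. The paper simply asserts the identifications $A^\te_{c^\pm,\mu}=A_{\pm\infty}$ without comment, whereas you explicitly verify via the sign analysis of $c/(\sum\al_i+1)$ that for $\te=0$ the only wall is at $c=0$, which is a welcome clarification but not a different method.
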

\begin{proof}
Let $\te=0$, $\mu=0$, and $c=0$. Then $A^\te_{c^-,\mu}=A_{-\infty}$, $A^\te_{c^+,\mu}=A_{+\infty}$ and $B^\te_\mu=\bb_U$. 
The only $-\infty$-stable representation has dimension $(0,1)$ and therefore $\tl A_{-\infty}=\tl A_*$ and
$$A_{c^-,\mu}^\te=A_{-\infty}=\Ieq(\tl A_{-\infty})/\Ieq(\tl A_*)=1.$$
Using the wall-crossing formula (Theorem \ref{wall-crossing thr}) we obtain
$$A_{+\infty}=A_{c^+,\mu}^\te=S_{\nu} \bb_\mu^\te \circ A_{c^-,\mu}^\te\circ (S_{-\nu} \bb_\mu^\te)\inv
=S_{\nu} \bb_\mu^\te \circ (S_{-\nu} \bb_\mu^\te)\inv$$
and the statement of the proposition follows.
\end{proof}

\begin{rmr}
We have seen that for any $\al\in\Ga^0$ we have $\nu(\al)=w\cdot\al$.
The above proposition implies that for any $w\in\cN^{Q_0}$ we can express $A^w_{NCDT}$ in terms of $\bb_{U}$. Conversely, if we know $A^w_{NCDT}$ for $w$ from the standard basis of $\cZ^{Q_0}$, then we can reconstruct $\bb_{U}$ and therefore also $A^w_{NCDT}$ for arbitrary $w$.
\end{rmr}

\begin{rmr}
\label{reconstr2}
It follows from \cite[Thorem 5.5]{mozgovoy_motivica} (which is a version of the result of Reineke \cite{reineke_harder-narasimhan}) that, for arbitrary $\te\in\cR^{Q_0}$ and $\mu\in\cR$, we can reconstruct $\bb^{\te}_\mu$ from $\bb_{U}$ in a quite explicit way.
On the other hand, by the previous proposition we can reconstruct $A_{+\infty}$ from $\bb_{U}$ (and $A_{-\infty}=1$, as we have seen). 
But if we know $\bb^\te_\mu$ for all $\mu\in\cR$ then using the wall-crossing formula (Theorem \ref{wall-crossing thr}) we can move from $A_{-\infty}$ (or $A_{+\infty}$) to $A_{c,\mu}^\te$ (see Theorem \ref{thr:asympt} in the symmetric case or Corollary \ref{NN3} in general). This means that we can also reconstruct the series $A_{c,\mu}^\te$ from $\bb_{U}$ for an arbitrary stability parameter $\te\in\cR^{Q_0}$, arbitrary parameter $c\in\cR$ and arbitrary slope $\mu\in\cR$. This is why we call the series $\bb_{U}$ universal.
\end{rmr}

Let us give now a couple of examples, where we can explicitly compute $\bb_{U}$ (and therefore also $A^\te_{c,\mu}$ according to Remark \ref{reconstr2}).
The following result was proved in \cite{behrend_motivic}.

\begin{thr}[Example: {$\cC^3$}]
Let $Q$ be a quiver with one vertex and three loops $x,y,z$. Let $W=xyz-xzy$. Then
$$\bb_{U}=\Exp\left(\frac{\cL^2}{\cL-1}\sum_{n\ge1}x^n\right).$$
\end{thr}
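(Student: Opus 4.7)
The plan is to reduce the statement to the explicit motivic computation of \cite{behrend_motivic} by interpreting $\bb_U$ via the integration map $\Ieq$ as a generating series of motivic vanishing cycles on spaces of matrix triples.

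First I would identify the Jacobian algebra. The partial derivatives $\dd_x W=[y,z]$, $\dd_y W=[z,x]$, $\dd_z W=[x,y]$ yield $J_{Q,W}\iso\cC[x,y,z]$, so the stack of $n$-dimensional representations is $[Z_n/\GL_n]$, where $Z_n=\sets{(X,Y,Z)\in\gl_n^3}{[X,Y]=[Y,Z]=[Z,X]=0}$ is the commuting variety of triples. This variety is the critical scheme of the trace function $W_n\colon\gl_n^3\to\cC$ defined by $W_n(X,Y,Z)=\tr(X[Y,Z])$. Next I would unwind the definition of $\Ieq$ from \cite{mozgovoy_motivica} applied to the trivial stability condition $\te=0$, $\mu=0$: every representation is semistable, so $\tl\bb_U$ is the motive of the whole moduli stack, and $\bb_U=\sum_n c_n x^n$ has coefficients $c_n$ that involve the motivic vanishing cycle class $[\phi_{W_n}]$ on $\gl_n^3$, divided by $[\GL_n]$ and twisted by the appropriate power of $-\cL^\oh$ coming from the Tits form $T(n)=-2n^2$.

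The heart of the argument is the evaluation of $[\phi_{W_n}]$. Because $W_n$ is linear in the entries of $X$, a motivic dimensional reduction identifies $[\phi_{W_n}]$ with an explicit twist of the class of the pair-commuting variety $Z_n^{(2)}=\sets{(Y,Z)\in\gl_n^2}{[Y,Z]=0}$. The remaining task is then to compute $\sum_n[Z_n^{(2)}]/[\GL_n]\,x^n$, which I would handle via the motivic Feit--Fine formula; a direct manipulation using $\Exp$ on the $\la$-ring $\cqt^0$ reorganizes the result into the plethystic exponential on the right-hand side. The main obstacle is the dimensional reduction step, where one must relate the motivic vanishing cycle of a linear potential to the motive of its zero locus with a precise power of $\cL^\oh$ in front; this is the technical core of \cite{behrend_motivic}, and I would cite it as a black box rather than reprove it.
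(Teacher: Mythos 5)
Your proposal is correct and follows the same route as the paper, which itself does not reprove the result but simply refers to the computation in the cited reference. Your sketch of what that computation involves (identifying $J_{Q,W}\iso\cC[x,y,z]$, recognizing the commuting-triple variety as the critical locus of the trace function, applying dimensional reduction for the linear variable, and invoking the motivic Feit--Fine formula) is an accurate summary of the content of that black box, so the appeal to it is well-placed.
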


The following result will be proved in \cite{morrison_motivica}

\begin{thr}[Example: Conifold]
Let $Q$ be a quiver with vertices $1,2$ and arrows $a_1,a_2:1\to2$, $b_1,b_2:2\to1$. Let $W=a_1b_1a_2b_2-a_1b_2a_2b_1$. Then
$$\bb_{U}
=\Exp\left(\frac{(\cL+\cL^2)x_1x_2-\cL^\oh(x_1+x_2)}{\cL-1}\sum_{n\ge0}(x_1x_2)^n\right).
$$
\end{thr}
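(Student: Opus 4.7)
The plan is to compute $\bb_U$ directly by unpacking its definition via the integration map of \cite{mozgovoy_motivica}. Writing $R(Q,\al)=\Hom(V_1,V_2)^{\oplus 2}\oplus\Hom(V_2,V_1)^{\oplus 2}$ for the representation variety with $V_i$ of dimension $\al_i$, $G_\al=\GL_{\al_1}\xx\GL_{\al_2}$ for the gauge group, and $W_\al\colon R(Q,\al)\to\cA^1$ for the trace-potential induced by $W$, the construction of $\Ieq$ gives
$$\bb_U=\sum_{\al\in\cN^2}(-\cL^\oh)^{\hi(\al,\al)}\frac{[\phi_{W_\al}]}{[G_\al]}\,x^\al,$$
where $[\phi_{W_\al}]$ is the motivic vanishing cycle class of $W_\al$. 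The task therefore reduces to evaluating this vanishing cycle uniformly in $\al$.

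The key step is motivic dimensional reduction. The conifold potential $W=a_1\bigl(b_1a_2b_2-b_2a_2b_1\bigr)$ is linear in the single arrow $a_1$, so the dimensional-reduction theorem of Davison--Meinhardt yields
$$[\phi_{W_\al}]=\cL^{\al_1\al_2}[Z_\al],\qquad Z_\al=\sets{(a_2,b_1,b_2)}{b_1a_2b_2=b_2a_2b_1}\sb\Hom(V_1,V_2)\oplus\Hom(V_2,V_1)^{\oplus 2},$$
up to a convention-dependent shift. This replaces a vanishing-cycle problem with a pure motivic computation on the commuting-type variety $Z_\al$. One then stratifies $Z_\al$ appropriately, for instance by the rank of $a_2$ and the relative position of $\ker b_1$ and $\ker b_2$; in each stratum the commutator relation becomes a linear condition whose solution space fibres over a product of Grassmannians and flag varieties with known motives. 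Summing over strata and dividing by $[G_\al]$ produces an explicit generating series.

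The final step is to recognise this generating series as the claimed plethystic exponential. Because the conifold quiver is symmetric, $\cqt^0$ is commutative and the Exp-calculus of \cite{mozgovoy_computational} applies; the numerator has a geometric interpretation that serves as a sanity check — the terms $-\cL^\oh x_i$ correspond to the two one-dimensional simple modules $S_i$, the summand $(\cL+\cL^2)x_1x_2=\cL[\cP^1]\cdot x_1x_2$ reflects the $\cP^1$-family of $(1,1)$-dimensional simples supported on the exceptional curve of the resolved conifold, and the factor $\sum_{n\ge 0}(x_1x_2)^n$ encodes twisting by $\lO(n)$ along that curve. The main obstacle is the stratification step: the bookkeeping of $\cL$-powers, signs, and group orders is delicate, and the identification with the Exp-form only becomes clean after a careful regrouping of terms. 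An alternative route, sidestepping the direct vanishing-cycle computation, is to first determine $A^w_{NCDT}$ for $w$ in the standard basis by enumerating cyclic modules as coloured pyramid partitions (after Szendr\H oi and Young, with motivic refinement) and then extract $\bb_U$ from the identity $A^w_{NCDT}=S_{2\nu}\bb_U\circ\bb_U\inv$ established earlier.
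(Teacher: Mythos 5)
The paper does not prove this theorem. The sentence immediately preceding the statement reads ``The following result will be proved in \cite{morrison_motivica}'', so the result is cited as forthcoming elsewhere, not established in the present work. There is therefore no internal argument for your proposal to be compared against, and any evaluation can only be on the merits of the proposal itself.

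As to those merits: the dimensional-reduction strategy you outline is the natural one (and is in fact the one pursued in the cited reference), and the reduction $[\phi_{W_\al}]=\cL^{\al_1\al_2}[Z_\al]$ with $Z_\al=\{b_1a_2b_2=b_2a_2b_1\}$ is the correct starting point since $W$ is linear in $a_1$. But what you have written is a plan, not a proof. The entire content of the theorem sits inside the step you defer: stratifying $Z_\al$, computing the motive of each stratum, dividing by $[G_\al]$, summing over $\al$, and then recognising the resulting series as $\Exp$ of the claimed argument. You explicitly flag this bookkeeping as ``the main obstacle'', and without carrying it out there is no way to derive the specific numerator $(\cL+\cL^2)x_1x_2-\cL^\oh(x_1+x_2)$ nor the factor $\sum_{n\ge0}(x_1x_2)^n$; the geometric identifications you offer (the $\cP^1$-family of $(1,1)$-simples, twists by $\lO(n)$) are plausibility checks on the answer, not a derivation of it. Your alternative route --- enumerate cyclic modules as coloured pyramid partitions with motivic weights, then extract $\bb_U$ from $A^w_{NCDT}=S_{2\nu}\bb_U\circ\bb_U\inv$ for $w$ in the standard basis --- is equally legitimate and closer in spirit to Szendr\H{o}i--Young and Nagao--Nakajima, but it trades the stratification problem for an equally nontrivial combinatorial computation and would again have to be carried through explicitly. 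Either route can succeed, but as written neither has been.
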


\begin{rmr}
Using Definition \ref{DT inv} we can express motivic Donaldson-Thomas invariants of the Jacobian algebra $J_{Q,W}$ as
$$\sum_{\al\in\cN^2}\Om_\al x^\al=\left((\cL+\cL^2)x_1x_2-\cL^\oh(x_1+x_2)\right)\sum_{n\ge0}(x_1x_2)^n.$$
In particular $\Om_\al\in\cZ[\cL^\oh]$ and $\Om_\al(-\cL^\oh)\in\cN[\cL^\oh]$.
The author expects that the property $\Om_\al(-\cL^\oh)\in\cN[\cL^{\pm\oh}]$ should hold for a broad class of symmetric quivers with potentials. This property for symmetric quivers with the trivial potential was conjectured by Kontsevich and Soibelman \cite{kontsevich_cohomological} and was proved  by Efimov \cite{efimov_cohomological}.
\end{rmr}

\bibliography{../tex/papers}
\bibliographystyle{../tex/hamsplain}

\end{document}